\documentclass[a4paper,11pt]{amsart}

\usepackage{amsfonts}
\usepackage{amssymb}
\usepackage{graphicx}%

\usepackage{amscd}
\usepackage{mathrsfs}
\newtheorem{theorem}{Theorem}[section]

\newtheorem{corollary}[theorem]{Corollary}
\newtheorem{definition}[theorem]{Definition}

\newtheorem{remark}[theorem]{Remark}

\numberwithin{equation}{section}

\def\DO{\mathcal D}
\def\RE{\mathbb R}
\def\CO{{\mathbb C}}
\def\N{\mathbb N}

\def\C{\mathcal C}

\def\K{\mathcal K}
\def\A{\mathcal A}

\def\W{\mathcal W}
\def\F{\mathcal F}
\def\wh{\widehat}


\begin{document}

\title[Ordinary Differential Equations with singular coefficients]
{Ordinary differential equations with singular coefficients: an
intrinsic formulation with applications to the Euler-Bernoulli
beam equation}

\author{Nuno Costa Dias}

\author{Cristina Jorge}

\author{Jo\~{a}o Nuno Prata}

\begin{abstract}
We study a class of linear ordinary differential equations (ODE)s with distributional coefficients. These
equations are defined using an {\it intrinsic} multiplicative product of Schwartz distributions which is an
extension of the H\"ormander product of distributions with non-intersecting singular supports [L. H\"ormander,
The Analysis of Linear Partial Diffe\-rential Operators I, Springer-Verlag, 1983]. We provide a regularization procedure for these ODEs and prove an existence and
uniqueness theorem for their solutions. We also determine the conditions for which the solutions are
regular and distributional. These results are used to study the Euler-Bernoulli beam
equation with discontinuous and singular coefficients. This problem was addressed in the past using intrinsic
products (under some restrictive conditions) and the Colombeau formalism (in the general case). Here we present
a new intrinsic formulation that is simpler and more general. As an application, the case of a non-uniform static
beam displaying structural cracks is discussed in some detail.
\end{abstract}

\maketitle

{\bf Keywords}: Linear ODE with distributional coefficients, Generalized solutions, Multiplicative products of
distributions, Euler-Bernoulli beam equation.

{\bf AMS Subject Classifications (2010)}: 34A30; 34A36; 34A37; 34K26; 46F10; 74G70; 74R99

\section{Introduction}

In this paper we study a class of ordinary differential equations (ODE)s {\it formally} of the form
\begin{equation}
\sum_{i=0}^n  c_i \psi^{(i)} =f \, , \label{eq01}
\end{equation}
where $f$ is a smooth function and the coefficients $c_i$ belong to the space of distributions $\A= \cup_{i=0}^\infty D^i_x [\C_p^{\infty}] \subset \DO'$, where $C_p^\infty$ is the space of piecewise smooth functions with support on $\RE$, $D^i_x$ is the $i$th-order distributional derivative, and $\DO'$ is the space of Schwartz distributions. The coefficients $c_i \in \A$ can be written explicitly in the form $c_i=f_i+\Delta_i$, where $f_i \in \C_p^\infty$ and $\Delta_i \in \DO'$ is a distribution of finite support. 

ODEs formally of this form appear naturally in models of non-smooth systems and of systems with singularities (e.g. systems with point interactions in quantum mechanics \cite{Albeverio1,Albeverio2,DPJ16}, beams with structural cracks \cite{Cad08,HO07,YSR01} in the classical theory of solids, etc). The main problem in these cases is how to define the formal equation (\ref{eq01}) precisely. Notice that for $c_i \in \A$, the ODE (\ref{eq01}) does not in general display smooth solutions and, unless some additional structure is introduced, it is not well-defined for non-smooth functions $\psi$ either (because the terms $c_i \psi^{(i)}$ may involve a product of two distributions).

This problem has been studied in several different cases (i.e. for particular sets of discontinuous or singular coefficients) using a variety of different approaches. Most significative are the
generalized functions formulation in the sense of Colombeau \cite{Col84,CLNP89,GKOS01,Obe92}, the formulations in terms of distributions acting on discontinuous test functions \cite{Albeverio2,Kurasov,KurasovBoman}, and the {\it
intrinsic} approaches using suitable multiplicative products of Schwartz distributions, e.g.
\cite{Cad08,DPJ16,HO07,Obe92,Sarrico,YSR01}\footnote{Hilbert space methods are also very important, namely in
the context of singular perturbations of Schr\"odinger operators \cite{Albeverio1,Albeverio2,Dias3,Golovaty}.}. In the latter case the
entire formulation is strictly defined within the standard space of Schwartz distributions. This considerably simplifies
the formulation when compared to the approaches based on generalized functions or more general distributions. However, it also restricts the type of
problems that can be considered, and the type of solutions that are admissible. For instance, model products,
which are the most general products in the hierarchy given by M. Oberguggenberger in (section 7, \cite{Obe92}),
have been used to formulate ODEs with discontinuous coefficients. While the formalism is compatible with
non-smooth solutions, it is not in general well-defined for discontinuous ones. This is clearly discussed in
\cite{HO07} where the formulation of the Euler-Bernoulli beam (EBB) equation with discontinuous coefficients was
studied in detail.

In this paper, we will study the following {\it intrinsic} formulation of (\ref{eq01}):
\begin{equation}
\sum_{i=0}^n \left(a_i(x) * \psi^{(i)}(x) + \psi^{(i)}(x) * b_i(x)\right)=f(x)  \, ,\label{eq0}
\end{equation}
for the case where $f$ is a smooth function, and the coefficients $a_i,b_i \in \A$. The key structure in (\ref{eq0}) is the intrinsic product of
distributions $*$ that was defined in \cite{DP09}. This product is an extension of the H\"ormander product
of distributions with non-intersecting singular supports (pag.55, \cite{Hor83}). It is associative, extends the
standard product of smooth functions, satisfies the Leibnitz rule and it is an inner operation in $\A$.
Moreover, it is non-commutative, which is the reason why the ODE (\ref{eq0}) has the left and right coefficients
$a_i$ and $b_i$, respectively.

Equipped with the product $*$, the
space $\A$ becomes a differential algebra of distributions that satisfies {\it all} the properties stated in the Schwartz impossibility theorem
\cite{Sch54,Ros87}. In fact, it is (essentially) the unique differential algebra that satisfies all these properties and contains the space $\C_p^\infty$ \cite{DJP16-2}.
There is no contradiction with Schwartz's result because $\A$ is only a subspace of $ \DO'$; which is however sufficiently large to
allow for a precise formulation of an interesting class of differential problems with distributional coefficients.
Notice that the differential expression (\ref{eq0}) with coefficients $a_i,b_i \in
\A$ is well-defined for all $\psi \in \A$, and thus the ODEs (\ref{eq0}) admit distributional coefficients,
including the Dirac measure and all its derivatives and, in general, may display discontinuous and
distributional solutions. Moreover, we will see that if $a_i, b_i$ are smooth then (\ref{eq0}) reduces to (\ref{eq01}) with
$c_i=a_i+b_i$, and thus the new equations generalize the standard linear ODEs.

The equations of the form (\ref{eq0}) have already been studied in the recent papers \cite{DP09,DPJ19}, but only for the case of piecewise smooth solutions. The aim of the present paper is to go one step further and to study the properties of (\ref{eq0}) for the most general case of arbitrary coefficients $a_i,b_i \in \A$ and distributional solutions $\psi \in \A$. In addition, the new formalism will be used to study the EBB equation with piecewise smooth or singular coefficients. Finally, we will also establish an interesting connection between the ODEs of the form (\ref{eq0}) and a class of functional equations that we shall call {\it limit ODE}s. 

Let us explain this last result in some detail. Let us re-write (\ref{eq0}) in the form $\wh L \psi =f$ where $\wh L : \A \longrightarrow \A$.
We will show that in the general case (i.e. for $a_i,b_i \in \A$) $\wh L$ is the weak operator limit of a large class of one-parameter families of operators
\begin{equation}\label{SmoothL}
\wh L_\epsilon =\sum_{i=0}^n \left( a_{i\epsilon} + b_{i \epsilon} \right) D_x^i \quad , \quad \epsilon >0
\end{equation}
with smooth coefficients $a_{i\epsilon}, b_{i \epsilon}$ that satisfy, in the sense of distributions,
\begin{equation} \label{SmoothCoef}
\lim_{\epsilon \downarrow 0} a_{i\epsilon} = a_i \qquad , \qquad  \lim_{\epsilon \downarrow 0} b_{i \epsilon}= b_i \, .
\end{equation}
It follows that $\psi$ is a (generalized) solution of (\ref{eq0}) iff it is a solution of the {\it limit ODE}:
\begin{equation} \label{LimitODE}
\lim_{\epsilon \downarrow 0 } \left( \wh L_\epsilon \psi \right) =f
\end{equation}
 for one (and thus for all) of the one-parameter families of operators $\wh L_\epsilon $ in the previous class. We thus conclude that the equation (\ref{eq0}) provides an approximation for the entire class of differential equations with smooth (possibly sharply concentrated) coefficients $\wh L_\epsilon \psi =f$, and an equivalent formulation of the limit differential equation (\ref{LimitODE}) which is manifestly independent of the particular sequence $\wh L_\epsilon \overset{w}{\longrightarrow} \wh L$.   
 

 
Here is a brief summary of our results: In the first part of the paper we study the main properties of the equations (\ref{eq0}) and (\ref{LimitODE}). The two equations are proved to be equivalent (for suitable sequences of smooth coefficients (\ref{SmoothCoef})) in Theorem \ref{Theorem_WL} and Corollary \ref{EqLimitODE}. The conditions for which their solutions are regular or singular are determined in Theorem \ref{1i}, and the interface conditions satisfied by the regular solutions are studied in Theorems \ref{1ii} and \ref{1iii}. Finally, an existence and uniqueness result for their solutions is proved in Theorem \ref{1iv} and Corollary \ref{Corollary-1iv}. A simple example is solved explicitly in section 4, in order to illustrate these results.  

In the second part of the paper, the new formalism is used to study the EBB equation with discontinuous and/or singular coefficients. The two cases are natural to consider in models of beams made of different sections, and of beams
with structural cracks \cite{BC07,Cad08,YSM00,YSR01}. Model products \cite{Obe92} have been used in this context, and shown not to be compatible with the case of singular coefficients \cite{HO07}. Instead, this case has
been formulated using other particular products under restrictive conditions \cite{Bag95,Bag02,BC07,Cad08} or,
alternatively, the formalism of generalized functions \cite{HO09,HKO13}. In this paper we provide a new and more general
intrinsic formulation, allowing for a unified treatment of the physically most relevant cases. As an application, several different types of beams are studied, including the case of beams with a structural crack at the contact point of two different segments. Up to our knowledge, this case has never been considered in the literature.



\subsection*{Notation} $\Omega$ and $\overline\Omega$ denote an arbitrary open interval of $\RE$ and its closure,
respectively. The functional spaces are denoted by calligraphic capital letters ($\A (\Omega)$, $\C(\Omega)$,
$\DO'(\Omega)$,...). If $\Omega =\RE$ we write only $\A$, $\C$, $\DO'$,... unless we want to emphasize that the
support is $\RE$.

$H$ is the Heaviside step function and $H_-=1-H$. Moreover, $\delta(x-x_0)$ is the Dirac measure with support at
$x_0$. If $x_0=0$ we sometimes write only $\delta$.

In general, we do not distinguish a locally integrable function from the associated regular
distribution (the only exception is in Definition 3.1, where we write $\phi_{\DO'}$ to denote the regular
distribution associated to the smooth function $\phi$).

The $n$th-order (Schwartz) distributional derivative of $\psi$ is written $D_x^n \psi$ or $\psi^{(n)}$. Letters
with a hat are operators.

\begin{section}{A multiplicative product of Schwartz distributions}

In this section we review some basic notions about Schwartz distributions and present the main properties of the
multiplicative product $*$. For details and proofs the reader should refer to \cite{DP09,DPJ16}. We also discuss a smooth regularization of the product $*$, and prove a new result (Theorem \ref{Theorem_WL}) that will be used in the next section to prove the equivalence of eqs.(\ref{eq0}) and (\ref{LimitODE}).  

\subsection{The algebra of distributions $\A$}

We start with some basic notation. Let $\DO(\Omega)$ denote the space of smooth functions with support on a
compact subset of $\Omega$ and let $\DO'(\Omega)$ be its dual, the space of Schwartz distributions. If $\Omega
=\RE$, we write simply $\DO'$. Let $F|_{\Omega}$ denote the restriction of $F \in \DO'$ to the space
$\DO(\Omega)$. We have, of course, $F|_{\Omega} \in \DO'(\Omega)$. The singular support of a distribution $F \in
\DO'$ (denoted sing supp $F$) is, as usual, the closed set of points where $F$ is not a smooth function.

An useful concept is the order of a distribution \cite{Kan98}: we say that $F \in \DO'$ is of {\it order} $n$
(and write $n=$ ord $F$) iff $F$ is the $n$th order distributional derivative (but not a lower order
distributional derivative) of a regular distribution.

Finally, let $\C_p^{\infty}$ be the space of piecewise smooth functions on $\RE$: $\psi \in \C_p^{\infty}$ iff
there is a finite set $I\subset \RE$ such that $\psi \in \C^{\infty}(\RE \backslash I)$ and the lateral limits
$\lim_{x \to x_0^{\pm}} \psi^{(j)} (x) $ exist and are finite for all $x_0 \in I$ and all $j \in \N_0$.

A distributional extension of the space $\C_p^{\infty}$ is given by:

\begin{definition}
Let $\A$ be the space of all functions in $\C_p^{\infty}$ - regarded as Schwartz distributions - together with
all their distributional derivatives to all orders. Moreover, for $\Omega \subset \RE$ an open set, the space of
distributions of the form $F|_{\Omega}$, where $F \in \A$, is denoted by $\A(\Omega)$.
\end{definition}

We have $\C_p^{\infty} \subset \A \subset \DO'$. All the elements of $\A$ are distributions with finite singular
support. They can be written in the form $F=\Delta_F +f$, where $\Delta_F$ is a distribution with finite support
(i.e. a finite linear combination of Dirac deltas and their derivatives) and $f \in \C_p^{\infty}$. The next
Theorem states this property more precisely:

\begin{theorem} \label{Theorem_A}
$F \in \A$ iff there is a finite set $I=\{x_1,...,x_m\} \subset \RE$ (where $x_i<x_k$ for $i<k$) associated with
a set of open intervals $\Omega_i=(x_i,x_{i+1})$, $i=0,..,m$ (where $x_0=-\infty$ and $x_{m+1}=+\infty$) such
that ($\chi_{\Omega_i}$ is the characteristic function of $\Omega_i$):
\begin{equation}\label{FormF}
F= \sum_{i=1}^m \sum_{j=0}^n c_{ij}\delta^{(j)}(x-x_i) + \sum_{i=0}^m f_i \chi_{\Omega_i}
\end{equation}
for some $c_{ij} \in \CO$ and $f_i \in \C^{\infty}(\RE)$. We have, of course, sing supp $F \subseteq I$.
\end{theorem}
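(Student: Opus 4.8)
The plan is to prove the two implications separately, with the derivative structure of piecewise smooth functions doing the main work. Before that, I would record the elementary fact that the spaces $D_x^i[\C_p^\infty]$ are nested: every $\psi \in \C_p^\infty$ admits a continuous antiderivative $\Psi \in \C_p^\infty$ (integrate on each smooth piece and fix the constants so that $\Psi$ has no jumps), and since $\Psi$ is continuous one has $D_x \Psi = \psi$ with no Dirac contributions; hence $D_x^i[\C_p^\infty] \subseteq D_x^{\,i+1}[\C_p^\infty]$. Consequently $\A$ is an increasing union, so any finite family of its elements lies in a common $D_x^N[\C_p^\infty]$, and $\A$ is in particular a vector space.

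For the sufficiency ($\Leftarrow$), I would check that each summand in (\ref{FormF}) belongs to $\A$ and then invoke the vector space structure just established. Each $f_i\chi_{\Omega_i}$ is piecewise smooth, hence lies in $\C_p^\infty \subseteq \A$. Each $\delta^{(j)}(x-x_i)$ equals $D_x^{\,j+1} H(x-x_i)$ with $H(x-x_i) \in \C_p^\infty$, so it lies in $D_x^{\,j+1}[\C_p^\infty] \subseteq \A$. A finite linear combination of elements of $\A$ is again in $\A$, which settles this direction.

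For the necessity ($\Rightarrow$), write $F = D_x^k \psi$ with $\psi \in \C_p^\infty$ and singular set $I = \{x_1, \dots, x_m\}$. The key tool is the jump formula for the distributional derivative of a piecewise smooth function $\phi$ with singularities in $I$,
\[
D_x \phi = \phi' + \sum_{i=1}^m \big(\phi(x_i^+) - \phi(x_i^-)\big)\,\delta(x-x_i),
\]
where $\phi'$ denotes the pointwise derivative (again piecewise smooth). I would then induct on $k$: the base case $k=0$ is the statement that a piecewise smooth function can be written as $\sum_i f_i \chi_{\Omega_i}$ (all $c_{ij}=0$); the inductive step applies $D_x$ to the assumed form, using the jump formula on each $f_i\chi_{\Omega_i}$ and the identity $D_x \delta^{(j)} = \delta^{(j+1)}$, and then collects terms. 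A short bookkeeping shows that $D_x^k\psi$ produces the pointwise derivative $\psi^{(k)}$ (piecewise smooth) together with a linear combination of the $\delta^{(j)}(x-x_i)$ with $0 \le j \le k-1$, i.e. the form (\ref{FormF}) with $n=k-1$. The inclusion sing supp $F \subseteq I$ is then immediate, since away from $I$ the expression (\ref{FormF}) coincides with a smooth function.

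Two points deserve care. First, in the base case and in recovering the globally smooth $f_i \in \C^\infty(\RE)$ appearing in (\ref{FormF}), I would extend each smooth piece — which is smooth on the open interval $\Omega_i$ and has finite one-sided limits of all derivatives at the endpoints — to a genuine element of $\C^\infty(\RE)$, invoking a smooth extension theorem (Seeley's extension theorem, or Borel's lemma applied at each endpoint). Second, the inductive bookkeeping must track that no delta derivative of order exceeding $k-1$ is ever produced; this is the most error-prone part, though it is routine once the jump formula is in hand. I expect the smooth-extension step to be the only genuinely non-elementary ingredient.
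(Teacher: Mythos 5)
Your proof is correct. There is, however, no in-paper proof to compare it against: Section~2 of the paper is a review, and Theorem~\ref{Theorem_A} is stated there without proof, with all details deferred to \cite{DP09,DPJ16}. Judged on its own merits, your argument is sound and is the natural one. The preliminary observation that $D_x^i[\C_p^\infty]\subseteq D_x^{i+1}[\C_p^\infty]$ (via a continuous antiderivative, whose distributional derivative has no Dirac terms) is exactly what is needed to make $\A$ a vector space, which legitimizes the ``sum of pieces'' argument in the easy direction. In the converse direction, the induction on $k$ in $F=D_x^k\psi$ driven by the jump formula is the standard bookkeeping, and your bound that only $\delta^{(j)}(x-x_i)$ with $j\le k-1$ can appear is right: each application of $D_x$ raises existing delta orders by one and creates only order-zero deltas from the jumps of the piecewise smooth part. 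Finally, your appeal to a smooth extension theorem to produce the globally smooth $f_i\in\C^{\infty}(\RE)$ from the pieces (Seeley, Borel, or Whitney) is precisely the ingredient the authors themselves use elsewhere in the paper: Whitney's extension theorem \cite{Whi34} is invoked in the proofs of Theorems \ref{TheoSSR} and \ref{1i} for exactly this purpose, so your proof is fully consistent with the paper's toolkit.
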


The product $*$ will be defined in the space $\A$. Let us first recall some basic definitions about products of
distributions. Let $\Xi \subseteq \RE$ be an open set. The dual product of $F \in \DO'(\Xi)$ by $g \in
\C^\infty(\Xi)$ is defined by
\begin{equation} \label{prod1}
\langle F\cdot g, t \rangle= \langle F, gt \rangle \quad , \quad \forall t \in \DO(\Xi)
\end{equation}

The H\"ormander product of distributions extends the dual product to the case of two distributions with
finite and disjoint singular supports (pag.55, \cite{Hor83}).

\begin{definition}
Let $F,G \in \A$  be two distributions with finite disjoint singular supports. Then there exists a finite open
cover of $\RE$ (denote it by $\{\Xi_i \subset \RE,\, i=1,..,d \}$) such that, on each open set $\Xi_i$, either
$F$ or $G$ is a $\C^{\infty}(\Xi_i)$-function. Hence, on each $\Xi_i$, the two distributions can be multiplied
using the dual product (\ref{prod1}). The H\"ormander product of $F$ by $G$ is then defined as the unique
distribution $F \cdot G \in \A$ that satisfies:
$$
\left(F \cdot G\right)|_{\Xi_i}= F|_{\Xi_i} \cdot G|_{\Xi_i} \quad , \quad  i=1,..,d.
$$
where the product on the right hand side is the dual product (we will use the same notation for the H\"ormander and the dual product since one is a trivial extension of the other).

\end{definition}

The new product $*$ extends the H\"ormander product to the case of an arbitrary pair of distributions in $\A $:

\begin{definition}
The multiplicative product $*$ is defined for all $F,G \in \A$ by:
\begin{equation} \label{prod}
F * G= \lim_{\epsilon \downarrow 0} F(x) \cdot G(x+\epsilon),
\end{equation}
where the product in $F(x) \cdot G(x+\epsilon)$ is the H\"ormander product and the limit is taken in the
distributional sense.
\end{definition}

The explicit form of $F*G$ is given in Theorem \ref{2.5} below, and the main properties of $*$ are stated in
Theorem \ref{2.7}. Let $F,G \in \A$ and let $I_F$ and $I_G$ be the singular supports of $F$ and $G$,
respectively. Let $I=I_F \cup I_G$ and write explicitly $I=\{x_1,..,x_m\}$ (where $x_i<x_k$, for $i<k$). Define
the open sets $\Omega_i=(x_i,x_{i+1})$, $i=0,..,m$ (with $x_0=-\infty$ and $x_{m+1}=+\infty$). Then, in view of
Theorem \ref{Theorem_A}, $F$ and $G$ can be written in the form:
\begin{eqnarray}\label{1}
F &=& \sum_{i=1}^m \sum_{j=0}^n a_{ij}\delta^{(j)}(x-x_i) + \sum_{i=0}^m f_i \chi_{\Omega_i} \nonumber \\
G &=& \sum_{i=1}^m \sum_{j=0}^n b_{ij}\delta^{(j)}(x-x_i) + \sum_{i=0}^m g_i \chi_{\Omega_i}
\end{eqnarray}
where $f_i,g_i \in \C^\infty$ and $a_{ij}=0$ if $x_i \notin I_F$ or if $j \ge$ ord $F$, and likewise for
$G$. Then we have:
\begin{theorem}\label{2.5}
Let $F,G \in \A$ be written in the form (\ref{1}). Then $F*G$ is given explicitly by
\begin{equation} \label{prodf}
F * G = \sum_{i=1}^m \sum_{j=0}^n \left[ a_{ij} g_i (x) + b_{ij} f_{i-1}(x) \right] \cdot \delta^{(j)}(x-x_i) +
\sum_{i=0}^m f_i g_i \chi_{\Omega_i}.
\end{equation}
and $F*G \in \A$.
\end{theorem}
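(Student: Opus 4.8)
The plan is to prove this by direct computation using the definition of $*$ as a limit of shifted Hörmander products, reducing to a local analysis near each singular point. First I would use the representation (\ref{1}) and exploit the fact that the product $*$ is defined via $F*G = \lim_{\epsilon \downarrow 0} F(x)\cdot G(x+\epsilon)$, where for each fixed $\epsilon>0$ the singular supports of $F(x)$ and $G(x+\epsilon)$ are disjoint (since $I_F$ and the shifted copy $I_G-\epsilon$ are finite and generically non-intersecting for small $\epsilon$). This means the Hörmander product on the right is well-defined and computable. By linearity of the Hörmander product in each factor over the decomposition (\ref{1}), it suffices to analyze the product term-by-term: the smooth-smooth terms $f_i \chi_{\Omega_i} \cdot g_k \chi_{\Omega_k}(x+\epsilon)$, the delta-smooth cross terms, and the delta-delta terms.

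Next I would compute each type of contribution in the limit $\epsilon \downarrow 0$. The smooth-smooth terms are the easiest: $f_i \chi_{\Omega_i}$ multiplied by the shifted $g_k \chi_{\Omega_k}$ survives only when the (shifted) intervals overlap, and as $\epsilon \downarrow 0$ these converge to $f_i g_i \chi_{\Omega_i}$, giving the second sum in (\ref{prodf}). The crucial and more delicate contributions are the cross terms. Consider a Dirac term $a_{ij}\delta^{(j)}(x-x_i)$ in $F$ against the smooth part of $G(x+\epsilon)$: near $x_i$, the relevant smooth piece of $G(x+\epsilon)$ is $g_i(x+\epsilon)$ since the shift by $\epsilon>0$ places the evaluation just to the \emph{right} of $x_i$, inside $\Omega_i$. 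Using the dual product (\ref{prod1}), $\delta^{(j)}(x-x_i)\cdot g_i(x+\epsilon)$ involves derivatives of $g_i$ at $x_i$, and taking $\epsilon\downarrow 0$ yields $a_{ij} g_i(x)\cdot \delta^{(j)}(x-x_i)$. Symmetrically, the smooth part $f_{i-1}$ of $F$ — which is the smooth piece on $\Omega_{i-1}$, i.e. just to the \emph{left} of $x_i$ — meets the Dirac term $b_{ij}\delta^{(j)}(x+\epsilon - x_i)$ of $G$, because shifting $G$ to the right brings its singularity at $x_i$ down to approximately $x_i-\epsilon$, which sits in $\Omega_{i-1}$; this produces $b_{ij} f_{i-1}(x)\cdot \delta^{(j)}(x-x_i)$. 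These two contributions assemble the first sum of (\ref{prodf}), and the asymmetry between $g_i$ and $f_{i-1}$ is exactly the source of non-commutativity.

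The main obstacle I expect is a careful treatment of the delta-delta terms and the justification that the $\epsilon\downarrow 0$ limit exists and lands in $\A$. For the delta-delta products, for fixed small $\epsilon>0$ the singularities of $a_{ij}\delta^{(j)}(x-x_i)$ and $b_{ik}\delta^{(k)}(x+\epsilon-x_i)$ are at distinct points $x_i$ and $x_i-\epsilon$, so the Hörmander product treats one as smooth near the other; but since a Dirac delta is not smooth anywhere, the product is only defined once we observe that on any open set avoiding one of the two points the other factor is zero (distributions with point support vanish away from that point). I would argue that these products contribute nothing in the limit: each delta factor is supported at a single point, and away from that point the other is genuinely smooth (in fact zero), so the local dual product computation gives terms that either vanish or are absorbed into the cross terms already counted. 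The remaining technical point is to verify convergence in $\DO'$ — pairing against a test function $t\in\DO$ and checking that the $\epsilon$-dependent pairings converge to the pairing with the right-hand side of (\ref{prodf}) — which follows from smoothness of the $f_i,g_i$ and continuity of the finitely many derivative evaluations in $\epsilon$. Finally, that $F*G\in\A$ is immediate from the explicit form (\ref{prodf}), since it is manifestly a finite sum of derivatives of Diracs plus a piecewise-smooth function, matching the characterization in Theorem \ref{Theorem_A}.
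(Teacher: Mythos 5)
Your proof is correct. Note first that this paper does not actually contain a proof of Theorem \ref{2.5}: Section 2 is a review, and the proofs are explicitly deferred to \cite{DP09,DPJ16}, so there is no internal argument to compare yours against. Your route---write both factors in the form (\ref{1}), observe that for all sufficiently small $\epsilon>0$ (not merely ``generically'') the singular supports of $F(x)$ and $G(x+\epsilon)$ are disjoint, compute the H\"ormander product term by term by bilinearity, and pass to the distributional limit---is the natural one, and it correctly identifies the asymmetry $g_i$ (value to the right of $x_i$) versus $f_{i-1}$ (value to the left) as the source of non-commutativity. Three small points, none a genuine gap: (i) the clause ``since a Dirac delta is not smooth anywhere'' is false as stated---$\delta^{(j)}(x-x_i)$ is smooth, indeed identically zero, off $x_i$, and that is precisely the fact your localization argument then uses correctly; consequently each delta--delta product is identically zero for every fixed $\epsilon>0$ (consistent with the last line of (\ref{prods})), so these terms simply vanish, and nothing needs to be ``absorbed into the cross terms''. (ii) In the smooth--smooth part, besides $f_ig_i$ on $\Omega_i\cap(\Omega_i-\epsilon)$ there are overlap terms $f_ig_{i+1}$ supported on the shrinking intervals $(x_{i+1}-\epsilon,x_{i+1})$; these tend to $0$ in $\DO'$ because the integrands are locally bounded, and a complete write-up should record this. (iii) ``$F*G\in\A$ is immediate from (\ref{prodf})'' requires the standard expansion of each dual product $g\cdot\delta^{(j)}(x-x_i)$ into a finite linear combination of $\delta^{(l)}(x-x_i)$, $0\le l\le j$, before the characterization in Theorem \ref{Theorem_A} can be invoked.
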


A simple Corollary of this Theorem is:

\begin{corollary} \label{2.6T}

Let $\Omega\subset \RE$ be an open set, and $F,G \in \A$ be such that $F|_\Omega = f \in \C^{\infty}(\Omega)$.
Then
\begin{equation} \label{2.6E}
\left(F*G)\right|_\Omega =\left(G*F)\right|_\Omega= f \cdot \left(G|_\Omega \right) \, .
\end{equation}

\end{corollary}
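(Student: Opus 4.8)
The plan is to prove Corollary \ref{2.6T} by a direct application of the explicit product formula (\ref{prodf}) from Theorem \ref{2.5}, specialized to the open set $\Omega$ on which $F$ is smooth. Since the statement concerns only the restriction of $F*G$ and $G*F$ to $\Omega$, the first step is to set up the canonical representation (\ref{1}) for $F$ and $G$ relative to the combined singular support $I=I_F\cup I_G=\{x_1,\dots,x_m\}$ and its associated intervals $\Omega_i=(x_i,x_{i+1})$. The key observation is that the hypothesis $F|_\Omega=f\in\C^\infty(\Omega)$ means $\Omega$ contains no point of $I_F$, so the only points of $I$ that can lie in $\Omega$ belong to $I_G$.

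First I would restrict (\ref{prodf}) to $\Omega$ and analyse the two types of contributions. For the smooth part $\sum_i f_i g_i\chi_{\Omega_i}$, on any subinterval $\Omega_i\cap\Omega$ the coefficient $f_i$ agrees with $f$ (because $F$ equals $f$ throughout $\Omega$), so the smooth part restricts to $f\cdot(G|_\Omega)$ as far as the regular part of $G$ is concerned. For the delta terms $[a_{ij}g_i+b_{ij}f_{i-1}]\delta^{(j)}(x-x_i)$, I would use the fact that any $x_i\in\Omega$ is not in $I_F$, hence $a_{ij}=0$ for all $j$ by the conventions following (\ref{1}); the surviving term is $b_{ij}f_{i-1}(x)\delta^{(j)}(x-x_i)$. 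Here the crucial point is that since $F$ is smooth on all of $\Omega$ and $x_i\in\Omega$, the left-hand smooth value $f_{i-1}$ and the right-hand smooth value $f_i$ coincide at $x_i$ (and indeed equal $f$ near $x_i$), so $b_{ij}f_{i-1}(x)\delta^{(j)}(x-x_i)$ is precisely the contribution of $f\cdot(G|_\Omega)$ at that singular point. Combining the two analyses yields $(F*G)|_\Omega=f\cdot(G|_\Omega)$.

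For the symmetric identity $(G*F)|_\Omega=f\cdot(G|_\Omega)$, I would repeat the computation with the roles of $F$ and $G$ interchanged in (\ref{prodf}): now the delta coefficients read $[b_{ij}f_i+a_{ij}g_{i-1}]\delta^{(j)}(x-x_i)$, and again $a_{ij}=0$ on $\Omega$ forces the surviving term to be $b_{ij}f_i(x)\delta^{(j)}(x-x_i)$, which equals $b_{ij}f_{i-1}(x)\delta^{(j)}(x-x_i)$ on $\Omega$ by the continuity of $f$ at $x_i$ noted above. Thus both orderings of the product give the same restriction to $\Omega$, establishing the full chain of equalities (\ref{2.6E}).

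The step I expect to require the most care is the bookkeeping that identifies $f_{i-1}(x_i)=f_i(x_i)=f(x_i)$ for singular points $x_i\in\Omega$ coming from $I_G$. Because the non-commutative product distinguishes the left value $f_{i-1}$ from the right value $f_i$, the asymmetry in (\ref{prodf}) is genuine in general; the identity rescues commutativity on $\Omega$ only because $F$ is smooth there, so both lateral values collapse to the single smooth value $f$. Making this observation precise — and checking it simultaneously handles the interval endpoints so that the regular parts match across each $\Omega_i\cap\Omega$ — is the heart of the argument, after which the conclusion follows by direct substitution.
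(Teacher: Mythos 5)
Your proof is correct and takes essentially the same route the paper intends, since the paper presents Corollary \ref{2.6T} precisely as a direct consequence of the explicit formula (\ref{prodf}) of Theorem \ref{2.5}: restricting to $\Omega$, the hypothesis kills all the $a_{ij}$ terms, and the one-sided pieces $f_{i-1}$, $f_i$ agree with $f$ there, so both $(F*G)|_\Omega$ and $(G*F)|_\Omega$ collapse to $f\cdot(G|_\Omega)$. The only refinement worth making explicit is that the dual product $f_{i-1}(x)\cdot\delta^{(j)}(x-x_i)$ depends on the derivatives of $f_{i-1}$ at $x_i$ up to order $j$, so you need equality of the full jets of $f_{i-1}$, $f_i$ and $f$ at $x_i$ (not just of their values); this follows, as your smoothness remark implicitly supplies, because each of these smooth functions coincides with $f$ on a one-sided neighborhood of $x_i$.
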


Other simple results that follow from (\ref{prodf}) are:
\begin{eqnarray} \label{prods}
& H(x) * \delta^{(i)}(x)  =  \delta^{(i)}(x)* H_-(x)=0 & \nonumber \\
& H_-(x) * \delta^{(i)}(x) = \delta^{(i)}(x)* H(x)= \delta^{(i)}(x) & \\
& \delta^{(i)}(x-x_0) *\delta^{(j)}(x-x_1)=0 & \nonumber
\end{eqnarray}
Here, $H$ is the Heaviside step function ($H(x)=1$ for $x\ge 0$, and $H(x)=0$ for $x<0$), $H_-(x)=1-H(x)$,
$x_0,x_1 \in \RE$ and $i,j \in \N_0$.

Finally, the main properties of $*$ are summarized in the following

\begin{theorem} \label{2.7}
The product $*$ is an inner operation in $\A$, it is associative, distributive and non-commutative. Moreover, it
reproduces the H\"ormander product of distributions if the singular supports of $F$ and $G$ do not intersect,
and the standard product of functions if $F$ and $G$ are regular distributions. In $\A$, the distributional
derivative $D_x$ is an inner operator and satisfies the Leibnitz rule with respect to the product $*$.
\end{theorem}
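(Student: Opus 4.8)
The plan is to derive every item from the explicit formula (\ref{prodf}) of Theorem \ref{2.5}, the special values (\ref{prods}), and the regularization (\ref{prod}) defining $*$. That $*$ is an inner operation, i.e. $F*G\in\A$, is already part of Theorem \ref{2.5}. For distributivity I would fix $F,G,K\in\A$, refine their singular supports to a common set $I=I_F\cup I_G\cup I_K$ and write all three in the normal form (\ref{1}) over the induced partition; adjoining such redundant nodes is harmless, since at a point where a distribution is smooth its $\delta$-coefficients vanish and the two adjacent smooth pieces share the same germ. The right-hand side of (\ref{prodf}) is then visibly bilinear in the coefficient data of the two factors, so $F*(G+K)=F*G+F*K$ and $(F+G)*K=F*K+G*K$ follow by matching coefficients. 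The reproduction statements are read off (\ref{prodf}) directly: if $I_F\cap I_G=\emptyset$ then at each node one of the two summands $a_{ij}g_i$, $b_{ij}f_{i-1}$ vanishes (the factor smooth there contributing no $\delta$) and the surviving term is exactly the dual product, so $F*G$ equals the H\"ormander product; if in addition neither factor carries $\delta$-terms, (\ref{prodf}) collapses to the pointwise product $\sum_i f_ig_i\chi_{\Omega_i}$. Non-commutativity is witnessed by (\ref{prods}): $H*\delta=0$ whereas $\delta*H=\delta$.

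That $D_x$ is an inner operator is immediate from the definition of $\A$, since every element of $\A$ is a distributional derivative of some $\phi\in\C_p^{\infty}$, and hence so is its derivative. For the Leibniz rule I would argue from (\ref{prod}). For $\epsilon>0$ small the singular supports $I_F$ and $I_G-\epsilon$ are disjoint, so $F(x)\cdot G(x+\epsilon)$ is a genuine H\"ormander product, which on each set of the covering is a dual product by a smooth function and therefore satisfies the ordinary Leibniz rule; since $D_x$ commutes with translation this gives $D_x\big(F(x)\cdot G(x+\epsilon)\big)=\big(D_xF\big)(x)\cdot G(x+\epsilon)+F(x)\cdot\big(D_xG\big)(x+\epsilon)$. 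Letting $\epsilon\downarrow0$ and using the continuity of $D_x$ on $\DO'$ together with $D_xF,D_xG\in\A$ yields $D_x(F*G)=(D_xF)*G+F*(D_xG)$.

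The genuine obstacle is associativity, $(F*G)*K=F*(G*K)$, and for this I would use a double regularization. For parameters $0<s<t$ small the three singular supports $I_F$, $I_G-s$, $I_K-t$ are pairwise disjoint, so at each point at least two of the factors are smooth; hence the triple product $\Phi(s,t):=F(x)\cdot G(x+s)\cdot K(x+t)$ is an unambiguously defined H\"ormander product, and being locally built from dual products by smooth functions it is genuinely associative. Unfolding both sides of the claimed identity through (\ref{prod}) and the continuity of the dual product in each argument (at fixed disjoint singular supports) gives $(F*G)*K=\lim_{t\downarrow0}\lim_{s\downarrow0}\Phi(s,t)$ and $F*(G*K)=\lim_{s\downarrow0}\lim_{t\downarrow s}\Phi(s,t)$. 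The decisive observation is that both iterated limits approach the corner $(0,0)$ through the \emph{same} region $0<s<t$ — in the first the inner limit $s\downarrow0$ stays below the fixed $t$, in the second the inner limit $t\downarrow s$ stays just above $s$ — so the line $s=t$, across which the relative order of the nodes of $G$ and $K$ (and hence the value of the product) would flip, is never crossed.

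It then suffices to show that the joint limit $\lim_{(s,t)\to(0,0),\,0<s<t}\Phi(s,t)$ exists in $\DO'$, for then both iterated limits coincide with it; this is where the analytic content sits. I would obtain it by writing $\Phi(s,t)$ explicitly via (\ref{prodf}) applied on the covering, observing that the translations merely displace the nodes and produce Taylor data of $f_i,g_i,k_i$ at the $x_i$ that depend continuously on $(s,t)$, and checking that each pairing $\langle\Phi(s,t),\tau\rangle$ converges uniformly on the region as $(s,t)\to0$. An equivalent but heavier alternative avoids the limit altogether: compute $F*G$ from (\ref{prodf}), re-expand each function-valued coefficient through $\phi(x)\delta^{(j)}(x-x_i)=\sum_{m=0}^{j}(-1)^{j-m}\binom{j}{m}\phi^{(j-m)}(x_i)\,\delta^{(m)}(x-x_i)$ into the standard form, apply (\ref{prodf}) a second time, and match against the analogous expansion of $F*(G*K)$, the two sides agreeing after a binomial rearrangement. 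I would present the regularization argument as the proof and relegate this bookkeeping identity to a remark, precisely because it is the computation the limit argument is meant to circumvent.
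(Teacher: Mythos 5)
The paper never proves Theorem \ref{2.7}: Section 2 is a review, and the theorem is quoted with proofs explicitly deferred to \cite{DP09,DPJ16}. So there is no in-paper argument to compare you against, and your proposal must be judged on its own; on that basis it is essentially correct. The routine items are handled properly: innerness is indeed part of Theorem \ref{2.5}; distributivity does follow from bilinearity of (\ref{prodf}) over a common refinement of the singular supports, and your parenthetical check that adjoining redundant nodes does not change (\ref{prodf}) is exactly the lemma needed there; the two reproduction statements read off (\ref{prodf}) as you say, the only hidden point being that at a node $x_i\notin I_G$ the germ of $g_i$ at $x_i$ coincides with that of $G$, so $a_{ij}\,g_i(x)\cdot\delta^{(j)}(x-x_i)$ is the dual product of $F$'s singular part by $G$; non-commutativity is (\ref{prods}); $D_x$ being inner is immediate from the definition of $\A$; and your Leibniz argument — Leibniz for the H\"ormander product at fixed $\epsilon>0$, then $\epsilon\downarrow 0$ using continuity of $D_x$ on $\DO'$ and applying (\ref{prod}) to the pairs $(D_xF,G)$ and $(F,D_xG)$ — is clean and complete.

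Associativity is, as you recognize, the only hard point, and your two-parameter scheme is a genuinely workable route. The decisive observation is correct and correctly placed: both iterated limits approach the corner through the same region $0<s<t$, and for small $0<s<t$ the three translated singular supports really are pairwise disjoint, since the relevant positive gaps between nodes of $I_F$, $I_G$, $I_K$ are bounded below. What remains at the level of assertion is the analytic core: (a) pulling the inner limit inside the product by the third factor is \emph{not} continuity of the dual product under $\DO'$-convergence (which is false in general); it requires $\C^{\infty}$-convergence of $F(x)\cdot G(x+s)$ locally near $I_K-t$, which does hold here because near those points the product is a product of smooth functions; and (b) the existence of the joint limit over the region, which your cluster-by-cluster description makes plausible but does not prove. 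Both are dischargeable, so this is a correct proof modulo writing out (a) and (b). Your fallback — applying (\ref{prodf}) twice on each side and matching after the binomial re-expansion — avoids every limit interchange, is the safer (if duller) way to close the argument, and is likely closer to how the cited references proceed.
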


Hence, the space $\A$ endowed with the product $*$ becomes an associative, noncommutative differential algebra
of distributions.

\subsection{Smooth regularization of the product $*$}

In view of Theorem \ref{Theorem_A} every $F \in \A$ can be written in the form $F=f + \Delta$ where $f \in C_p^\infty$ and $\Delta=\sum_{i,j} c_{ij} \delta^{(j)}(x-x_i)$. For each $F \in \A$ we can then define the following {\it associated} one-parameter families of smooth functions $F_{\epsilon}^-$ and $F_{\epsilon}^+$, which converge to $F$ in $\DO'$ as $\epsilon \to 0^+$. 

\begin{definition} \label{Smoothaprox}

Let $F=f + \Delta \in \A$. Let $I_f=$ sing supp $f$ (which is a finite set) and define $I_f(\epsilon)= \cup_{x\in I_f}[x-\epsilon,x+\epsilon]$, $\epsilon >0$. 

For some $\epsilon_0>0$, let $\left(f_\epsilon\right)_{0<\epsilon \le \epsilon_0}$ be a one-parameter family of smooth functions such that:
\begin{enumerate}

\item[(C1)] $f_\epsilon(x)=f(x) , \,\, \forall \, x\notin I_f(\epsilon)$.  

\item[(C2)] The functions $f_\epsilon$ are uniformally bounded on the sets $I_f(\epsilon)$, i.e. there exists $M>0$ such that for all $0<\epsilon \le \epsilon_0$:
$$
|f_\epsilon(x)| \le M  \, \, ,  \, \, \forall x\in I_f(\epsilon)  \, .
$$ 
\end{enumerate}

Moreover, for each $x_i \in$ supp $\Delta$, and $0<\epsilon \le \epsilon_0$, let $v_{x_i\epsilon}$ be a smooth, non-negative function such that:
\begin{enumerate}
	\item[(C3)] supp $v_{x_i\epsilon} \subseteq [x_i-\epsilon, x_i + \epsilon]$, 
	\item[(C4)] $\int v_{x_i\epsilon}(x) \, dx =1$, 
\end{enumerate} 
and define $v_{\epsilon}= \sum_{i,j} c_{ij} (v_{x_i\epsilon}) ^{(j)}$, where the coefficients $c_{ij}$ are the ones in $\Delta=\sum_{i,j} c_{ij} \delta^{(j)}(x-x_i)$.

Finally, let $F_\epsilon (x)= f_\epsilon(x) + v_\epsilon(x)$, and define the right and left shifts of $F_\epsilon$: 
\begin{equation}
F_{\epsilon}^+ (x)=F_\epsilon (x-\epsilon) \quad , \quad F_{\epsilon}^- (x)=F_\epsilon (x+\epsilon) 	\, .
\end{equation}

For a given $F \in \A$, the set of {\it associated} one-parameter families of functions of the form $\left(F_{\epsilon}^- \right)_{0<\epsilon \le\epsilon_0}$ is denoted by $\F_-(F)$, while the set of one-parameter families of functions of the form  
$\left(F_{\epsilon}^+ \right)_{0<\epsilon \le\epsilon_0}$ is denoted by $\F_+(F)$.

\end{definition}

Before we proceed let us also define the following operators. Let $F \in \A$. Then
\begin{equation}\label{Aop}
\wh F_+: \A \longrightarrow \A; \, \wh F_+ \psi= F * \psi \qquad , \qquad \wh F_-: \A \longrightarrow \A; \, \wh F_- \psi= \psi * F
\end{equation}
If $F \in \C^{\infty}$ then the previous operators are both identical to:
\begin{equation}\label{Smoothop}
\wh F: \A \longrightarrow \A; \, \wh F \psi= F \cdot \psi
\end{equation}
where $\cdot$ is the dual product. 

We then have:
\begin{theorem} \label{Theorem_WL}

Let $F \in \A$ and let 
$$
\left(F_{\epsilon}^- \right)_{0<\epsilon \le\epsilon_0} \in \F_-(F) \quad \mbox{and} \quad \left(F_{\epsilon}^+ \right)_{0<\epsilon \le\epsilon_0} \in \F_+(F)
$$ 
be two one-parameter families of 
smooth functions associated to $F$. Then, in the sense of distributions: 
\begin{equation}\label{Dlimit}
\lim_{\epsilon \downarrow 0 } \, {F_{\epsilon}^\pm} = F
\end{equation}
Moreover:
\begin{equation}\label{weaklimit}
{\rm w}\lim_{\epsilon \downarrow 0} \, \wh {F^\pm_{\epsilon}} = \wh F_\pm
\end{equation}
where $\rm{wlim}$ denotes the weak operator limit, the operators $\wh{F^\pm_{\epsilon}}=F^\pm_{\epsilon} \cdot $ are (for each $\epsilon$) of the form (\ref{Smoothop}), and the operators $\wh F_\pm$ are given by (\ref{Aop}).

\end{theorem}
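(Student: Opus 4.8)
The plan is to prove the two assertions separately, establishing first the distributional convergence \eqref{Dlimit} and then using it, together with the structure of the product $*$, to obtain the weak operator convergence \eqref{weaklimit}. For the distributional limit, I would write $F = f + \Delta$ as in Definition \ref{Smoothaprox} and treat the regular part $f_\epsilon$ and the singular part $v_\epsilon$ independently, exploiting linearity. For the regular part, conditions (C1) and (C2) are exactly what is needed: by (C1) the functions $f_\epsilon$ agree with $f$ outside the shrinking neighborhoods $I_f(\epsilon)$ of the (finite) singular support, and by (C2) they remain uniformly bounded on those neighborhoods, whose total Lebesgue measure tends to $0$. Testing against an arbitrary $t \in \DO$, the difference $\langle f_\epsilon - f, t\rangle$ is an integral supported on $I_f(\epsilon)$ of a uniformly bounded integrand, hence is $O(\epsilon)$ and vanishes in the limit; this is a routine dominated-convergence argument. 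For the singular part, conditions (C3) and (C4) say that each $v_{x_i\epsilon}$ is a nonnegative bump of mass $1$ supported in $[x_i-\epsilon, x_i+\epsilon]$, i.e. a mollifier contracting to $\delta(x-x_i)$, so that $(v_{x_i\epsilon})^{(j)} \to \delta^{(j)}(x-x_i)$ weakly; summing against the coefficients $c_{ij}$ gives $v_\epsilon \to \Delta$. Finally, the shifts $F^\pm_\epsilon(x) = F_\epsilon(x \mp \epsilon)$ differ from $F_\epsilon$ by a translation of size $\epsilon$, and since translation is continuous on $\DO'$ and tends to the identity as the shift goes to $0$, the shifted families have the same limit $F$. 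This proves \eqref{Dlimit}.

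For the weak operator limit \eqref{weaklimit}, I must show that for every fixed $\psi \in \A$,
\begin{equation}
\lim_{\epsilon \downarrow 0} \bigl(F^\pm_\epsilon \cdot \psi\bigr) = \wh F_\pm \psi \qquad \text{in } \DO',
\end{equation}
where $\wh F_+ \psi = F * \psi$ and $\wh F_- \psi = \psi * F$. The crucial observation is that the shift in the definition of $F^\pm_\epsilon$ is designed precisely to separate the singular support of the regularized coefficient from that of $\psi$: for small $\epsilon$ the bumps $v_{x_i\epsilon}(\,\cdot\, \mp \epsilon)$ are concentrated just to one side of $x_i$, so on a neighborhood of each singular point the product $F^\pm_\epsilon \cdot \psi$ reduces to a dual product of a smooth function against a distribution, matching the one-sided selection of lateral limits that the explicit formula \eqref{prodf} for $*$ encodes. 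Concretely, I would decompose $\psi = g + \Theta$ (regular plus finite-support singular part) and analyze the four products $f_\epsilon \cdot g$, $f_\epsilon \cdot \Theta$, $v_\epsilon \cdot g$, and $v_\epsilon \cdot \Theta$; the first three converge by the distributional limit \eqref{Dlimit} combined with Corollary \ref{2.6T}, and the last — the product of two singular regularizations — is where the sign of the shift matters. Testing $v_\epsilon \cdot \Theta$ against $t \in \DO$ near a common singular point $x_i$ and using that $v^{\pm}_\epsilon$ sits in $[x_i - \epsilon \mp \epsilon,\, x_i + \epsilon \mp \epsilon]$, I would compute the limit by Taylor-expanding the smooth factor $g$ at $x_i$ from the appropriate side and integrating against the mollifier derivatives, recovering exactly the coefficients $a_{ij} g_i + b_{ij} f_{i-1}$ appearing in \eqref{prodf}.

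The main obstacle is this last step: verifying that the limit of the doubly-singular product $v_\epsilon \cdot \Theta$ reproduces the correct one-sided lateral values dictated by \eqref{prodf}, since a naive symmetric regularization would produce an averaged (and hence incorrect) value such as $\tfrac12(f_i + f_{i-1})$ instead of the one-sided selection. The resolution lies entirely in the asymmetric shift $x \mapsto x \mp \epsilon$ built into $F^\pm_\epsilon$, which guarantees that as $\epsilon \downarrow 0$ the support of the regularized bump approaches $x_i$ strictly from the left (for $F^+_\epsilon$) or the right (for $F^-_\epsilon$), thereby picking out $f_{i-1}$ or $f_i$ consistently with the definition \eqref{prod} of $*$ as the limit $F(x) \cdot G(x+\epsilon)$. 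Once this matching is carried out for a single singular point, the general case follows by linearity over the finite set $I$ and the finitely many orders $j$, completing the proof.
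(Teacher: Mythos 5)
Your overall strategy is the same as the paper's: establish (\ref{Dlimit}) by splitting $F=f+\Delta$ and using dominated convergence, then prove (\ref{weaklimit}) by decomposing $\psi=g+\Theta$ (the paper's $\Xi$) and examining the cross terms; your part on (\ref{Dlimit}) is essentially the paper's part 1. The gap is in the operator limit, where you locate the difficulty in exactly the wrong place. The two terms you dismiss as routine, $f_\epsilon^\pm\cdot\Theta$ and $v_\epsilon^\pm\cdot g$, do \emph{not} follow from ``(\ref{Dlimit}) combined with Corollary \ref{2.6T}'': multiplication against a fixed singular distribution (or a fixed discontinuous function) is not continuous under distributional convergence of the other factor. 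Concretely, $\langle f_\epsilon(\cdot\mp\epsilon)\cdot\delta^{(j)},t\rangle$ depends on the derivatives of $f_\epsilon(\cdot\mp\epsilon)$ at the singular point, which are controlled neither by (\ref{Dlimit}) nor by (C2) (inside $I_f(\epsilon)$ the derivatives of $f_\epsilon$ typically blow up like $\epsilon^{-1}$); and an unshifted symmetric bump would give $v_\epsilon\cdot H\to\tfrac12\delta$ instead of $\delta * H=\delta$. These two terms are precisely where the one-sided selections $b_{ij}f_{i-1}$ and $a_{ij}g_i$ of (\ref{prodf}) must be verified, and they are what the paper actually proves: its case 2.2 shows, using (C1), that $f_\epsilon(x-\epsilon)$ coincides with the left branch $f_-$ on $(-\infty,0]$, whence $f_\epsilon(\cdot-\epsilon)\cdot\Xi_i\to f_-\cdot\Xi_i=f*\Xi_i$; the companion case $F=\Delta$, which contains your term $v_\epsilon\cdot g$, is quoted from [Theorem 3.3, \cite{DPJ16}].

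Conversely, the term you single out as the main obstacle, $v_\epsilon^\pm\cdot\Theta$, is the trivial one. Since supp $v_{x_i\epsilon}(\cdot-\epsilon)\subseteq[x_i,x_i+2\epsilon]$, this smooth function vanishes on $(-\infty,x_i)$, so all of its derivatives vanish at $x_i$; by Leibniz, $v_\epsilon^+\cdot\Theta_i=0$ identically for small $\epsilon$ (and likewise for the left shift), in agreement with $\delta^{(i)}(x-x_0)*\delta^{(j)}(x-x_1)=0$ in (\ref{prods}). Your proposed computation for it --- Taylor-expanding ``the smooth factor $g$'' and recovering the coefficients $a_{ij}g_i+b_{ij}f_{i-1}$ --- cannot be carried out, because $g$ does not occur in the product $v_\epsilon\cdot\Theta$ at all, and those coefficients arise exactly from the two cross terms you set aside. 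The repair is to redirect your one-sided support/Taylor analysis to $f_\epsilon\cdot\Theta$ and $v_\epsilon\cdot g$ (and to note that $f_\epsilon\cdot g$ requires (C1)--(C2) plus dominated convergence, as in the paper's case 2.1, not merely (\ref{Dlimit})); with those cases done, the theorem follows by bilinearity over the finite singular supports, as you say.
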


\begin{proof}

The two identities (\ref{Dlimit}) and (\ref{weaklimit}) were already proved in [Theorem 3.3, \cite{DPJ16}] for the case $F=\Delta=\sum_i c_i\delta^{(i)}(x)$. The extension to the case where $\Delta$ has support on more than one point (but on a finite set) was also discussed in \cite{DPJ16} and is strainghtforward. 

We then focus on the remaining case $F=f \in \C_p^\infty$, and consider the simplest example where sing supp $f=\{0\}$. The proof of the general case where sing supp $f$ is an arbitrary finite set follows exactly the same steps. We divide the proof in two parts:
 
1) Proof of eq.(\ref{Dlimit}). We have to show that $F_{\epsilon}^\pm(x) =f_\epsilon (x\mp \epsilon) \overset{\DO'}{\longrightarrow} f$ as $\epsilon \to 0^+$. The action of $F_{\epsilon}^+(x)$ on an arbitrary test function $t \in \DO$ yields:
\begin{eqnarray*}
&& \lim_{\epsilon \downarrow 0} \langle f_\epsilon (x - \epsilon),t \rangle =
\lim_{\epsilon \downarrow 0} \int f_\epsilon (x - \epsilon) t(x) dx \\
&=&  \lim_{\epsilon \downarrow 0} \left[ \int f (x) t(x+ \epsilon) dx - \int_{-\epsilon}^\epsilon f(x) t(x+ \epsilon) dx + \int_{-\epsilon}^\epsilon f_\epsilon (x) t(x+ \epsilon) dx \right]
\end{eqnarray*}
where we used the property (C1) from Definition \ref{Smoothaprox}.
The integrands in the second and third integrals are bounded functions and thus the limit $\epsilon \to 0^+$ of these integrals is zero. By dominated convergence of the first integral we then have:
$$
\lim_{\epsilon \downarrow 0} \langle f_\epsilon (x - \epsilon),t \rangle = \int f (x) t(x) dx = 
 \langle f,t \rangle
$$
which proves (\ref{Dlimit}) for $F_{\epsilon}^+(x)$. An equivalent result is valid for 
$F_{\epsilon}^-(x)$. This concludes the proof of (\ref{Dlimit}).

2) Proof of eq.(\ref{weaklimit}). For $F=f$, eq.(\ref{weaklimit}) reads (for the case $\wh F_+$):
\begin{eqnarray} \label{Case-}
& & {\rm w}\lim_{\epsilon \downarrow 0} \, \wh {F_{\epsilon}^+} = \wh F_+ \, \,   \Longleftrightarrow \, \, 
{\rm w}\lim_{\epsilon \downarrow 0} \, f_{\epsilon}(x-\epsilon) \cdot = f * \\
& \Longleftrightarrow & \lim_{\epsilon \downarrow 0} \langle
f_\epsilon(x-\epsilon) \cdot \psi(x) , t\rangle = \langle f * \psi, t \rangle \, , \quad \forall \psi \in \A , \quad \forall t \in \DO \, .\nonumber 
\end{eqnarray}
Let us write $\psi = g +\Xi$ where $g \in \C_p^{\infty}$ and $\Xi$ is of finite support. We then consider the two cases $\psi=g$ and $\psi =\Xi$ separately:

2.1) For $\psi =g$ we have:
$$
\langle
f_\epsilon(x-\epsilon) \cdot g(x) , t \rangle 
= \int f_\epsilon(x-\epsilon) g(x) t(x) \, dx 
$$
Setting $h=gt$ then $h \in \C_p^{\infty}$ and is of compact support. The previous integral yields:
$$
\int f_\epsilon(x) h(x+\epsilon) \, dx
= \int f (x) h(x+ \epsilon) dx - \int_{-\epsilon}^\epsilon f(x) h(x+ \epsilon) dx + \int_{-\epsilon}^\epsilon f_\epsilon (x) h(x+ \epsilon) dx 
$$
where we used the property (C1) from the definition of $F_{\epsilon}^+$. The limit $\epsilon \to 0^+$ of the second and third integrals is zero (because the integrands are bounded) and the limit of the first integral yields (by dominated convergence):
$$
\lim_{\epsilon \downarrow 0} \int f (x) h(x+ \epsilon) dx = \int f (x) h(x) dx =  \langle
f * g , t \rangle 
$$
where we have used the fact that the $*$ product of regular distributions reproduces the standard product of functions (cf. Theorem \ref{2.7}). Hence:
\begin{equation}\label{Caseg-}
\lim_{\epsilon \downarrow 0}\langle
 f_\epsilon(x-\epsilon) \cdot g(x) , t \rangle =\langle
f * g , t \rangle \, .
\end{equation}

2.2) Now consider the remaining case $\psi=\Xi$. Since supp $\Xi$ is a finite set, we can write $\Xi= \sum_i \Xi_i$ where supp $\Xi_i = \{x_i \}$, $x_i \in \RE$. We then have to calculate:
$$
\langle
f_\epsilon(x-\epsilon) \cdot \Xi , t \rangle =\sum_i \langle
f_\epsilon(x-\epsilon) \cdot \Xi_i , t \rangle \, .
$$
Since $f \in \C_p^{\infty} \cap \C^{\infty} (\RE \backslash \{0\})$, we can write it in the form $f=H_-f_- +Hf_+$ where $f_-,f_+ \in \C^\infty(\RE)$. Let us assume that $x_i \le 0$. For $x\le 0$ we have $x-\epsilon \le -\epsilon$ and thus from (C1) in Definition \ref{Smoothaprox}, $f_\epsilon(x-\epsilon)=f(x-\epsilon)=f_-(x-\epsilon)$. Since supp $\Xi_i =\{x_i \}\subset \RE^-_0$, we get:
$$
\langle
f_\epsilon(x-\epsilon) \cdot \Xi_i , t \rangle =\langle
f_-(x-\epsilon) \cdot \Xi_i , t \rangle =
\langle
  \Xi_i , f_-(x-\epsilon) t \rangle \, .
$$
Moreover, $f_-(x-\epsilon) t \overset{\DO}{\longrightarrow} f_-(x) t$ in the limit $\epsilon \to 0^+$, and thus:
$$
\lim_{\epsilon \downarrow 0} \langle
f_\epsilon(x-\epsilon) \cdot \Xi_i , t \rangle =
\lim_{\epsilon \downarrow 0} \langle
  \Xi_i , f_-(x-\epsilon) t \rangle = 
\langle
  \Xi_i , f_-(x) t \rangle = \langle
  f_- \cdot \Xi_i , t \rangle \, .
$$   
Finally, we also have from (\ref{prodf}) (check (\ref{prods})): 
$$
f * \Xi_i =  \left( H_-f_-+ Hf_+ \right) * \Xi_i= f_- \cdot \Xi_i
$$
and thus:
$$
\lim_{\epsilon \downarrow 0 } \langle
f_\epsilon(x-\epsilon) \cdot \Xi_i , t \rangle =
 \langle  f* \Xi_i, t \rangle \, .
$$
An equivalent result is valid for $x_i >0$. After summing in $i$, we get:
$$
\lim_{\epsilon \downarrow 0} \langle
f_\epsilon(x-\epsilon) \cdot \Xi , t \rangle =
 \langle  f* \Xi , t \rangle \, .
$$ 
Adding this result to (\ref{Caseg-}) we conclude the proof of (\ref{Case-}). An equivalent result can be obtained for the case $\wh F_-$.
	
\end{proof}

\end{section}

\section{Regularity, existence and uniqueness results}\label{g}

In this section we study the general properties of the ODEs with distributional coefficients (\ref{eq0}):  
$$
\sum_{i=0}^n \left(a_i(x) * \psi^{(i)}(x) + \psi^{(i)}(x) * b_i(x)\right)=f(x)  
$$
where $a_i,b_i \in \A$, $f \in \C^\infty$ and $*$ is the distributional product (\ref{prod}, \ref{prodf}).

We also study the associated 
 initial value problems (IVP)s with initial conditions, formally
\begin{equation}\label{IC}
\overline{\psi(x_0)} =\overline{C}
\end{equation}
where $x_0$ is a {\it regular} point of (\ref{eq0}) (cf. Definition 3.1) and
$$
\overline{\psi(x_0)} =(\psi(x_0),...,\psi^{(n-1)}(x_0))^T \quad , \quad \overline{C}=(C_1,...,C_n)^T \in \CO^n
\, .
$$
where the superscript $T$ denotes transposition.

Finally, we will also consider the limit ODEs of the form:
\begin{equation}\label{LimitODE2}
\lim_{\epsilon \downarrow 0} \left(\sum_i \left( a_{i\epsilon}^+ +b_{i\epsilon}^- \right) \cdot \psi^{(i)} \right) =f
\end{equation}
where $\left(a_{i\epsilon}^+\right)_{\epsilon \in I} \in \F_+(a_i)$, $\left( b_{i\epsilon}^-\right)_{\epsilon \in I} \in \F_-(b_i)$ are one-parameter families of 
smooth functions associated with $a_i, b_i \in \A$,  $I=]0,\epsilon_0] $ (cf. Definition \ref{Smoothaprox}).



\subsection{Definitions and preliminary results}

The equations (\ref{eq0}), (\ref{LimitODE2}) and the initial conditions (\ref{IC}) are defined in the distributional sense. More precisely:

\begin{definition}:

\begin{itemize}
\item [(A1)] $\psi$ is a solution of the ODEs (\ref{eq0}) (or (\ref{LimitODE2})) iff $\psi
\in \A$ and $\psi$ satisfies (\ref{eq0}) (respectively (\ref{LimitODE2})) in the sense of distributions. In (\ref{LimitODE2}), the limit $\epsilon \to 0^+$  is taken in $\DO'$. \item [(A2)] A point $x_0 \in \RE$ is said
to be a {\it regular point} of (\ref{eq0}) iff $x_0$ does not belong to the singular supports of $a_i,b_i$. An
interval is a {\it regular interval} of (\ref{eq0}) iff all its points are regular. \item [(A3)] $\psi$
satisfies the initial conditions (\ref{IC}) at a regular point $x_0$ iff there exists an open interval $\Omega \ni x_0$ and a function $\phi \in \C^{\infty}(\Omega)$ such that: (i)
 $\psi=\phi_{\DO'}$ on $\Omega$ (where $\phi_{\DO'}$ denotes the regular distribution associated to
$\phi$), and (ii) $\phi^{(i)}(x_0)= C_{i+1}$, $i=0,...,n-1$.

\end{itemize}
\end{definition}

In the following Corollary of Theorem \ref{Theorem_WL} we show that the eqs.(\ref{eq0}) and (\ref{LimitODE2}) are equivalent for suitable choices of the coefficients.

\begin{corollary} \label{EqLimitODE}
Consider the equation (\ref{eq0}) with coefficients $a_i, b_i \in \A$. Let $I=]0,\epsilon_0]$, and let $\left(a_{i\epsilon}^+\right)_{\epsilon \in I} \in \F_+(a_i)$, $\left(b_{i\epsilon}^-\right)_{\epsilon \in I} \in \F_-(b_i)$ be two one-parameter families of smooth functions associated to $a_i$ and $b_i$, respectively (cf. Definition \ref{Smoothaprox}). Then the limit ODE (\ref{LimitODE2}) with coefficients $a_{i\epsilon}^+,b_{i\epsilon}^-$ is equivalent to eq.(\ref{eq0}). 
	
\end{corollary}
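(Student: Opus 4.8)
The plan is to derive Corollary \ref{EqLimitODE} directly from Theorem \ref{Theorem_WL} by unpacking what the weak operator limit means when applied to a fixed but arbitrary $\psi \in \A$. First I would fix the coefficient families $\left(a_{i\epsilon}^+\right)_{\epsilon \in I} \in \F_+(a_i)$ and $\left(b_{i\epsilon}^-\right)_{\epsilon \in I} \in \F_-(b_i)$ as in the statement, and observe that the operators appearing in the limit ODE (\ref{LimitODE2}) are precisely the smooth multiplication operators $\wh{a_{i\epsilon}^+}$ and $\wh{b_{i\epsilon}^-}$ of the form (\ref{Smoothop}) acting on $\psi^{(i)}$. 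Theorem \ref{Theorem_WL}, applied with $F=a_i$ (using the $+$ shift) and with $F=b_i$ (using the $-$ shift), gives the two weak operator limits
\begin{equation*}
{\rm w}\lim_{\epsilon \downarrow 0} \, \wh{a_{i\epsilon}^+} = \wh{(a_i)}_+ \qquad , \qquad {\rm w}\lim_{\epsilon \downarrow 0} \, \wh{b_{i\epsilon}^-} = \wh{(b_i)}_- \, .
\end{equation*}
By the definition (\ref{Aop}) of these operators, $\wh{(a_i)}_+ \psi^{(i)} = a_i * \psi^{(i)}$ and $\wh{(b_i)}_- \psi^{(i)} = \psi^{(i)} * b_i$, which are exactly the two terms of the intrinsic ODE (\ref{eq0}).

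The core of the argument is then to translate these weak operator limits, which hold for each individual summand, into a single distributional limit of the full left-hand side of (\ref{LimitODE2}). Concretely, for a fixed $\psi \in \A$ the weak operator convergence means that for every test function $t \in \DO$,
\begin{equation*}
\lim_{\epsilon \downarrow 0} \langle a_{i\epsilon}^+ \cdot \psi^{(i)}, t \rangle = \langle a_i * \psi^{(i)}, t \rangle \qquad , \qquad \lim_{\epsilon \downarrow 0} \langle b_{i\epsilon}^- \cdot \psi^{(i)}, t \rangle = \langle \psi^{(i)} * b_i, t \rangle \, .
\end{equation*}
Because the sum over $i=0,\dots,n$ is finite and pairing with $t$ is linear, I can add these finitely many limits to obtain
\begin{equation*}
\lim_{\epsilon \downarrow 0} \left\langle \sum_i \left( a_{i\epsilon}^+ + b_{i\epsilon}^- \right) \cdot \psi^{(i)}, t \right\rangle = \left\langle \sum_i \left( a_i * \psi^{(i)} + \psi^{(i)} * b_i \right), t \right\rangle \, ,
\end{equation*}
valid for all $t \in \DO$. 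This is precisely the statement that the distributional limit in (\ref{LimitODE2}) equals the left-hand side of (\ref{eq0}) as elements of $\DO'$.

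Finally I would close the equivalence by noting that $\psi \in \A$ solves (\ref{LimitODE2}) iff the distributional limit of its left-hand side equals $f$, while $\psi$ solves (\ref{eq0}) iff $\sum_i \left( a_i * \psi^{(i)} + \psi^{(i)} * b_i \right) = f$ in $\DO'$; since the two left-hand sides have just been shown to coincide in $\DO'$ for every $\psi \in \A$, and both equations require the same right-hand side $f$, the solution sets are identical. I expect the argument to be essentially immediate once Theorem \ref{Theorem_WL} is invoked, so there is no serious obstacle; the only point requiring a little care is the bookkeeping of the shift directions — matching the left coefficients $a_i$ with the $+$-shifted families and the right coefficients $b_i$ with the $-$-shifted families — so that the noncommutativity of $*$ lines up correctly with which operator $\wh F_\pm$ each family converges to. A secondary point worth stating explicitly is that the convergence in (\ref{LimitODE2}) is genuinely pointwise in $\psi$, i.e. the limit ODE is interpreted for a fixed $\psi$, so that the per-summand weak operator limits suffice and no uniformity in $\psi$ is needed.
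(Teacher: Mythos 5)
Your proposal is correct and follows essentially the same route as the paper: both invoke Theorem \ref{Theorem_WL} for each coefficient pair $(a_i,b_i)$ with the matching shift directions, apply the resulting weak operator limits to $\psi^{(i)} \in \A$ (using that $\A$ is closed under differentiation), and sum the finitely many terms to identify the distributional limit in (\ref{LimitODE2}) with the left-hand side of (\ref{eq0}). The only cosmetic difference is that the paper packages the sum into a single operator $\wh{L_\epsilon}$ and takes its weak operator limit, whereas you unpack the same convergence against test functions term by term.
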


\begin{proof}

In Theorem \ref{Theorem_WL} we have proved that 
\begin{equation} \label{ab_WL}
\mbox{w}\lim_{\epsilon \downarrow 0} a_{i\epsilon}^+\cdot =\wh{a_i}_+ \quad , \quad   
\mbox{w}\lim_{\epsilon \downarrow 0} b_{i\epsilon}^-\cdot =\wh{b_i}_-
\end{equation}
where $a_{i\epsilon}^+\cdot $ and $b_{i\epsilon}^-\cdot $ are defined as operators in $\A$ (of the form (\ref{Smoothop})); and $\wh{a_i}_+\psi = a_i*\psi$ and  
$\wh{b_i}_-\psi = \psi * b_i$ are also operators in $\A$ (of the form (\ref{Aop})).

Let us define the differential operator:
\begin{equation}
\wh{L_\epsilon}=\sum_i \left(a_{i\epsilon}^+ + 
	b_{i\epsilon}^- \right) \cdot D^i_x	
\end{equation}
with domain $\A$. Since $\A$ is closed under differentiation, we have from (\ref{ab_WL}):
$$
\mbox{w}\lim_{\epsilon \downarrow 0} \wh{L_\epsilon} = \sum_i \left(\wh{a_i}_+ + 
	\wh{b_i}_- \right) D^i_x 
$$
Let $\wh L$ denote the operator on the right hand side of the previous equation. Then for every $\psi \in \A$,  we have in the sense of distributions: 
$$
\lim_{\epsilon \downarrow 0} \left(\wh{L_\epsilon} \psi\right) = \wh L \psi
$$
and thus $\psi$ is a solution of (\ref{LimitODE2}) iff it is a solution of $\wh L \psi =f$. The latter equation is exactly eq.(\ref{eq0}). 
 
 \end{proof}

Hence, the equations of the form (\ref{eq0}) provide an alternative formulation for the class of limit ODEs of the form (\ref{LimitODE2}). For the rest of this section we will focus on the properties of the equations (\ref{eq0}); it follows from Corollary \ref{EqLimitODE} that all the results are equally valid for the limit ODEs (\ref{LimitODE2}).

Let us proceed. If $\Omega$ is an open {\it regular} interval of (\ref{eq0}) then the restrictions $a_{i}|_\Omega,b_{i}|_\Omega$
are regular distributions associated to smooth functions. These functions admit a unique smooth extension to
$\overline{\Omega}$ (recall that $a_i,b_i \in \A$, and Theorem \ref{Theorem_A}). Let
$a_{i\overline{\Omega}},b_{i\overline{\Omega}}$ denote these extensions. From now on we will always assume
that (\ref{eq0}) satisfies the following property

\begin{definition} Sectionally Regular ODE. \label{Re}

The ODE (\ref{eq0}) is said to be sectionally regular iff, for every open regular interval $\Omega$ of
(\ref{eq0}), arbitrary $x_0\in \overline{\Omega}$ and $\overline{C}\in \CO^n$, the associated IVP
\begin{equation}\label{CODE}
\sum\limits_{i = 0}^n (a_{i\overline{\Omega}}+b_{i\overline{\Omega}}) \psi^{(i)}_{\overline{\Omega}}=
f|_{\overline{\Omega}} \quad , \quad \overline{\psi_{\overline{\Omega}}(x_0)}=\overline{C}
\end{equation}
has a unique solution $\psi_{\overline{\Omega}} \in \C^{\infty}(\overline{\Omega})$.

\end{definition}

The next theorem provides sufficient conditions for (\ref{eq0}) to be sectionally regular:

\begin{theorem} \label{TheoSSR}
Consider the ODE (\ref{eq0}) with coefficients $a_i,b_i \in \A$ , $i=0,..,n$ such that, for every regular
interval ${\Omega}$,
\begin{equation} \label{CondSR}
a_{n\overline{\Omega}}(x)+b_{n\overline{\Omega}}(x) \not=0 \quad , \quad \forall x \in \overline{\Omega} \, .
\end{equation}
Then (\ref{eq0}) is sectionally regular.
\end{theorem}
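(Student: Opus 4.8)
The plan is to reduce the initial value problem (\ref{CODE}) to a classical linear ODE with smooth coefficients and then invoke the standard existence and uniqueness theory for such equations. The entire content of the theorem sits in the non-degeneracy hypothesis (\ref{CondSR}), which is precisely what allows the equation to be put in normal form.

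First I would make precise the nature of the coefficients on a regular interval. Fix an open regular interval $\Omega$. By Definition \ref{Re} no point of $\Omega$ lies in the singular support of any $a_i$ or $b_i$, so the restrictions $a_i|_\Omega$, $b_i|_\Omega$ are regular distributions. By the structure theorem (Theorem \ref{Theorem_A}), each $a_i,b_i$ equals a function of the form $\sum_k f_k \chi_{\Omega_k}$ plus a distribution supported off $\Omega$, with $f_k \in \C^\infty(\RE)$; hence on $\Omega$ the coefficients coincide with functions that are smooth on all of $\RE$ and in particular admit the unique smooth extensions $a_{i\overline{\Omega}},b_{i\overline{\Omega}}$ to $\overline{\Omega}$ appearing in (\ref{CODE}). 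Thus (\ref{CODE}) is genuinely a linear ODE with $\C^\infty(\overline{\Omega})$ coefficients.

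Next I would put the equation in normal form. By hypothesis (\ref{CondSR}) the leading coefficient $p(x):=a_{n\overline{\Omega}}(x)+b_{n\overline{\Omega}}(x)$ is nonzero for every $x\in\overline{\Omega}$; being smooth, $1/p$ is then also smooth on $\overline{\Omega}$. Dividing (\ref{CODE}) by $p$ yields
$$
\psi^{(n)}_{\overline{\Omega}} = \frac{1}{p}\left( f|_{\overline{\Omega}} - \sum_{i=0}^{n-1}(a_{i\overline{\Omega}}+b_{i\overline{\Omega}})\,\psi^{(i)}_{\overline{\Omega}} \right),
$$
an $n$th-order scalar linear ODE with smooth coefficients and smooth inhomogeneity. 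Recasting it as a first-order system $\vec{y}\,' = A(x)\vec{y} + \vec{g}(x)$ with $\vec{y}=(\psi_{\overline{\Omega}},\dots,\psi^{(n-1)}_{\overline{\Omega}})^T$ gives a companion matrix $A$ and inhomogeneity $\vec{g}$ with entries smooth on $\overline{\Omega}$. The classical existence and uniqueness theorem for linear systems then produces, for any $x_0\in\overline{\Omega}$ and any initial vector $\overline{C}\in\CO^n$, a unique solution defined on the whole interval; global existence with no finite-time blow-up, even when $\overline{\Omega}$ is unbounded, is guaranteed by linearity via the Gr\"onwall estimate. Smoothness is then obtained by bootstrapping: the solution is a priori $C^1$, and the normal-form equation expresses $\psi^{(n)}_{\overline{\Omega}}$ as a smooth combination of $x$ and the lower-order derivatives, so repeated differentiation shows $\psi_{\overline{\Omega}}\in\C^\infty(\overline{\Omega})$.

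The only point requiring care, and the main potential obstacle, is the behaviour at the boundary when $\overline{\Omega}$ is unbounded or when $x_0$ is an endpoint of $\overline{\Omega}$. Both are handled by the two facts just used: linearity forces the maximal solution to exist on the entire interval, and the smoothness of the extended coefficients on the \emph{closed} interval $\overline{\Omega}$ propagates, through the bootstrap, to smoothness of $\psi_{\overline{\Omega}}$ up to and including the endpoints. Since $\Omega$, $x_0$ and $\overline{C}$ were arbitrary, this establishes that (\ref{eq0}) is sectionally regular in the sense of Definition \ref{Re}.
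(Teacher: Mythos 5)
Your proof is correct and follows the same core strategy as the paper: use hypothesis (\ref{CondSR}) to divide by the leading coefficient, put (\ref{CODE}) in normal form, and then invoke classical existence and uniqueness theory for linear ODEs. The genuine difference is in how the endpoints of $\overline{\Omega}$ are handled. The paper extends the normal-form coefficients $c_i$ from $\overline{\Omega}$ to a strictly larger open interval $I\supset\overline{\Omega}$ via the Whitney extension theorem, and then quotes the standard open-interval statements of Picard's theorem and of maximal smooth solvability of linear equations (citing Teschl); initial data at an endpoint of $\overline{\Omega}$ thereby become interior initial data for the extended equation. You instead work directly on the closed (possibly unbounded) interval: companion first-order system, Picard/Gr\"onwall for global existence and uniqueness, and a bootstrap for $\C^\infty$ regularity up to the boundary in the one-sided sense. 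Both routes are valid. Yours is more self-contained, since it avoids Whitney's theorem entirely, but it leans on the closed-interval version of linear ODE theory, with one-sided derivatives at endpoints, which is true but less commonly stated in textbooks; the paper's Whitney step buys the convenience of citing off-the-shelf open-interval results at the cost of invoking an extra nontrivial extension theorem. Your preliminary paragraph justifying, via Theorem \ref{Theorem_A}, that the coefficients on a regular $\Omega$ are restrictions of globally smooth functions is a point the paper establishes in the text preceding Definition \ref{Re} rather than inside the proof itself, so including it makes your argument complete rather than redundant.
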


\begin{proof}
In view of (\ref{CondSR}), for every regular interval ${\Omega}$, we can re-write (\ref{CODE}) in the form
\begin{equation}\label{CODE3}
\psi^{(n)}_{\overline\Omega}=\sum\limits_{i = 0}^{n-1} c_{i} \psi^{(i)}_{\overline\Omega} +
f|_{\overline{\Omega}} \quad , \quad \overline{\psi_{\overline{\Omega}}(x_0)}=\overline{C}
\end{equation}
where
$$
c_i= - \frac{a_{i\overline{\Omega}}+b_{i\overline{\Omega}}}{a_{n\overline{\Omega}}+b_{n\overline{\Omega}}} \in
C^{\infty}(\overline{\Omega}) \quad , \quad i=0,..,n-1 \, .
$$
It follows from Whitney extension theorem \cite{Whi34} that the functions $c_i$ admit a smooth extension to an
open interval $I \supset \overline{\Omega}$. Let $c_{iI}$, $i=0,..,n-1$ denote these extensions. Picard's
theorem then implies that the solution of the linear IVP
\begin{equation} \label{CODE4}
\psi^{(n)}_I=\sum\limits_{i = 0}^{n-1} c_{iI} \psi^{(i)}_I + f|_{I} \quad , \quad  \overline{\psi_I(x_0)}=
\overline{C}
\end{equation}
exists and is unique for each initial conditions given at $x_0 \in I$.

Since $c_{iI}, f|_{I} \in \C^{\infty}(I)$, it is also well-known from the theory of linear ODEs that $\psi_I$ is
maximal defined on the whole interval $I$ and is smooth (cf. [Lemma 2.3 and Theorem 3.9 \cite{Teschl}]). Hence (\ref{CODE3}), and thus (\ref{CODE}), have a unique smooth solution, and so (\ref{eq0}) is sectionally regular.

\end{proof}

Finally, in the next theorem we prove a simple result relating the solutions of (\ref{CODE}) and (\ref{eq0}); it will be used in the next section.

\begin{theorem} \label{SolSR}

Let (\ref{eq0}) be sectionally regular, and let $\Omega$ be an open regular interval of (\ref{eq0}). If $\psi$
is a solution of (\ref{eq0}) then, on $\Omega$, it satisfies $\psi= \psi_{\overline{\Omega}}$ for
$\psi_{\overline{\Omega}}$ a solution of (\ref{CODE}) for some initial data.

\end{theorem}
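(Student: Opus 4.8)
The plan is to restrict the equation (\ref{eq0}) to the regular interval $\Omega$, show that the restriction $\psi|_\Omega$ is forced to be a smooth function, and finally identify it with a classical solution of (\ref{CODE}) using the uniqueness built into sectional regularity. The main obstacle will be the regularity step; the rest is essentially bookkeeping.

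First I would restrict (\ref{eq0}) to $\Omega$. Because $\Omega$ is a regular interval, for each $i$ the restrictions $a_i|_\Omega$ and $b_i|_\Omega$ coincide with the smooth functions $a_{i\overline\Omega}$ and $b_{i\overline\Omega}$. Hence Corollary \ref{2.6T} applies with the smooth factor playing the role of $F$, giving $(a_i * \psi^{(i)})|_\Omega = a_{i\overline\Omega}\cdot(\psi^{(i)}|_\Omega)$ and $(\psi^{(i)} * b_i)|_\Omega = (b_i * \psi^{(i)})|_\Omega = b_{i\overline\Omega}\cdot(\psi^{(i)}|_\Omega)$. Since restriction to the open set $\Omega$ is linear and commutes with differentiation, restricting (\ref{eq0}) yields the distributional identity on $\Omega$
$$
\sum_{i=0}^n (a_{i\overline\Omega}+b_{i\overline\Omega})\cdot(\psi|_\Omega)^{(i)} = f|_\Omega .
$$

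The hard part is to show that $\psi|_\Omega$ is (the regular distribution associated with) a smooth function. I would first observe that sectional regularity forces the leading coefficient $a_{n\overline\Omega}+b_{n\overline\Omega}$ to be nowhere zero on $\overline\Omega$: if it vanished at some $x_0$, evaluating the classical equation at $x_0$ would impose a single nontrivial linear constraint on $(\psi(x_0),\dots,\psi^{(n-1)}(x_0))$, and choosing $\overline C$ to violate it would leave the IVP (\ref{CODE}) with no solution, contradicting Definition \ref{Re}. With the leading coefficient smooth and invertible on $\Omega$, I would divide through to get $\psi^{(n)}=\sum_{i=0}^{n-1}c_i(\psi|_\Omega)^{(i)}+\tilde f$ with $c_i,\tilde f\in C^\infty(\Omega)$, and rewrite this as a first-order companion system $\Psi'=A\Psi+F$, where $\Psi=(\psi|_\Omega,(\psi|_\Omega)',\dots,(\psi|_\Omega)^{(n-1)})^T$ and $A,F$ are smooth. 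Letting $\Phi$ be a smooth invertible fundamental matrix ($\Phi'=A\Phi$), the Leibniz rule for the dual product of smooth functions with distributions gives $\tfrac{d}{dx}(\Phi^{-1}\Psi)=\Phi^{-1}F$, whose right-hand side is smooth. A distribution on an interval whose derivative is smooth is itself smooth, so $\Phi^{-1}\Psi$, and hence $\Psi=\Phi(\Phi^{-1}\Psi)$, is smooth; in particular $\psi|_\Omega\in C^\infty(\Omega)$. This is the genuine content of the theorem: one needs the nonvanishing of the leading coefficient, guaranteed by sectional regularity, to upgrade a distributional solution of the restricted equation to a classical one.

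To conclude, $\psi|_\Omega$ is now a smooth solution of the classical linear ODE $\sum_{i=0}^n(a_{i\overline\Omega}+b_{i\overline\Omega})\psi^{(i)}=f$ on $\Omega$. I would fix any $x_0\in\Omega$, set $\overline C=(\psi(x_0),\dots,\psi^{(n-1)}(x_0))^T$, and invoke sectional regularity to produce the unique solution $\psi_{\overline\Omega}\in C^\infty(\overline\Omega)$ of the IVP (\ref{CODE}) with this data. Since $\psi|_\Omega$ and $\psi_{\overline\Omega}|_\Omega$ satisfy the same linear equation on $\Omega$ with the same Cauchy data at $x_0$ and nonvanishing leading coefficient, uniqueness for linear ODEs forces them to coincide on $\Omega$. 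Hence $\psi=\psi_{\overline\Omega}$ on $\Omega$ for a solution $\psi_{\overline\Omega}$ of (\ref{CODE}), which is the assertion.
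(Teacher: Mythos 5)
Your restriction step and your concluding identification coincide with the paper's own proof: the paper also invokes Corollary \ref{2.6T} to reduce (\ref{eq0}) on $\Omega$ to the smooth-coefficient identity $\sum_{i=0}^n (a_{i\overline\Omega}+b_{i\overline\Omega})(\psi|_\Omega)^{(i)} = f|_\Omega$, and then simply asserts that the solutions of this restricted equation are restrictions of the smooth solutions of (\ref{CODE}). You are right that this assertion is the real mathematical content of the theorem, since a priori $\psi|_\Omega \in \A(\Omega)$ could be discontinuous or contain delta terms; the paper offers no argument for it. Your companion-system argument --- pass to $\Psi'=A\Psi+F$, use a smooth fundamental matrix $\Phi$ to get $\bigl(\Phi^{-1}\Psi\bigr)'=\Phi^{-1}F$, and conclude that a distribution on an interval with smooth derivative is smooth --- is a correct and complete way to carry out this regularity upgrade, \emph{provided} the leading coefficient $a_{n\overline\Omega}+b_{n\overline\Omega}$ is nowhere zero on $\overline\Omega$.

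The genuine gap is your claim that sectional regularity \emph{forces} this nonvanishing. Your argument presupposes that evaluating the classical equation at a zero $x_0$ of the leading coefficient yields a \emph{nontrivial} constraint on $\overline C$; if all the lower-order coefficients and $f$ also vanish at $x_0$, the constraint is $0=0$ and no contradiction with Definition \ref{Re} arises. Concretely, take $n=1$, $a_1(x)=x$, and all other coefficients and $f$ equal to zero. Every IVP $x\psi'=0$, $\psi(x_0)=C$, has the unique smooth solution $\psi \equiv C$, so this ODE is sectionally regular, yet the leading coefficient vanishes at $0$. Moreover $\psi = H$ satisfies $x * \psi' = x * \delta = x\cdot\delta = 0$, so $H$ is a non-smooth solution of (\ref{eq0}) on the regular interval $\RE$: the degenerate case you failed to exclude is not merely a hole in your proof, it is a counterexample to Theorem \ref{SolSR} as literally stated, and equally invalidates the unproved assertion in the paper's own two-line proof. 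The statement implicitly requires a nondegeneracy hypothesis such as condition (\ref{CondSR}) of Theorem \ref{TheoSSR}; under that hypothesis your claim about the leading coefficient becomes vacuous, your remaining argument goes through, and your proof is then complete and strictly more informative than the paper's.
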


\begin{proof}

If $\Omega$ is an open regular interval of (\ref{eq0}) then on $\Omega$ (\ref{eq0}) reduces to:
\begin{equation} \label{CODE2}
\sum\limits_{i = 0}^n (a_{i}|_{\Omega}+b_{i}|_{\Omega}) \psi|_{{\Omega}}^{(i)}= f|_{{\Omega}} \, ,
\end{equation}
which is a consequence of $a_{i}|_{\Omega},b_{i}|_{\Omega} \in \C^{\infty}({\Omega})$, $\psi^{(i)} \in \A$ and eq.(\ref{2.6E}). 



Since (\ref{CODE2}) is a restriction of (\ref{CODE}) to $\Omega$, its solutions are restrictions (to $\Omega$) of the solutions of (\ref{CODE}). Let $\psi$ be a solution of (\ref{eq0}). Then it satisfies (\ref{CODE2}) on $\Omega$, and so there exists $\psi_{\overline{\Omega}}$, solution of (\ref{CODE}) for some initial data, such that $\psi=\psi_{\overline{\Omega}}$ on $\Omega$.

\end{proof}

\subsection{Main Results}

We assume, to simplify the discussion, that sing supp $a_i,b_i \subseteq\{0\}$, $i=0,..,n$. If there are more
(but a finite number of) singular points, the main results are essentially the same. The coefficients of
(\ref{eq0}) can then be written:
\begin{eqnarray}\label{Coeff}
a_i & = & H_- a_{i-}+H a_{i+} + A_i \quad,  \quad  i=0,..,n\nonumber \\
b_i& = & H_- b_{i-}+H b_{i+} + B_i \quad , \quad i=0,..,n
\end{eqnarray}
where $a_{i\pm},  b_{i\pm} \in \C^\infty (\RE) $, supp $A_i$, supp $B_i \subseteq \{0\}$, $i=0,..,n$. Hence,
both $A_i$ and $B_i$ are given by a finite linear combination of a Dirac delta and its derivatives:
\begin{equation} \label{Coef}
A_i(x) = \sum_k A_{ik} \delta^{(k)}(x) \quad , \quad B_i(x) = \sum_k B_{ik} \delta^{(k)}(x) \quad , \quad
i=0,..,n
\end{equation}
where $A_{ik},B_{ik} \in \CO$ and $k\in \N_0$. Finally, let
$$
M= {\rm max} \, \{{\rm ord}\,  A_i \, , \,  {\rm ord} \, B_i \, ; \, i =0,...,n\} \, .
$$
Then

\begin{theorem}\label{1i}
Consider the ODE (\ref{eq0}) with coefficients of the form (\ref{Coeff},\ref{Coef}) and satisfying
$a_{n-}(0)+b_{n+}(0)\not=0$. If (\ref{eq0}) is sectionally regular then its general solution is of the form
\begin{equation} \label{GF}
\psi=H_- \psi_-+H\psi_+ + \Delta
\end{equation}
where $\psi_- , \psi_+ \in \C^\infty (\RE)$ satisfy
\begin{equation}\label{R-}
\sum_{i=0}^n(a_{i-}+b_{i-}) \psi^{(i)}_- =f \quad \mbox{on} \quad \RE_0^-
\end{equation}
and
\begin{equation} \label{R+}
\sum_{i=0}^n(a_{i+}+b_{i+}) \psi^{(i)}_+ =f \quad \mbox{on} \quad \RE_0^+
\end{equation}
respectively. Moreover, $\Delta \in \DO'$ satisfies:

(i) if $M \le n$ then $\Delta=0$.

(ii) if $M > n$ then supp $\Delta \subseteq \{0\}$ and ord $\Delta \le M-n$.

\end{theorem}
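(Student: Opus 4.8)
The plan is to first pin down the coarse structure of an arbitrary solution $\psi$ and then isolate the part of the equation supported at $\{0\}$ in order to bound the order of $\Delta$. Since, by definition, any solution lies in $\A$, and since the only possible singular point of the coefficients (\ref{Coeff}),(\ref{Coef}) is $0$, the intervals $(-\infty,0)$ and $(0,\infty)$ are regular. On each of them Theorem \ref{SolSR} shows that $\psi$ agrees with a smooth solution of the associated classical problem (\ref{CODE}); in particular $\psi$ is smooth off $0$, so sing supp $\psi\subseteq\{0\}$ and Theorem \ref{Theorem_A} gives the decomposition $\psi=H_-\psi_-+H\psi_++\Delta$ with $\psi_\pm\in\C^\infty(\RE)$ and $\Delta=\sum_k d_k\delta^{(k)}$ supported at $0$. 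Applying Theorem \ref{SolSR} to $\Omega=(-\infty,0)$ and $\Omega=(0,\infty)$, together with Corollary \ref{2.6T}, identifies the restricted equations with (\ref{R-}) and (\ref{R+}). This settles the form (\ref{GF}) and the two regular equations; it remains to control $\Delta$.

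The idea for the order bound is to substitute (\ref{GF}) into (\ref{eq0}) and read off the component supported at $\{0\}$. Writing $\psi^{(i)}=H_-\psi_-^{(i)}+H\psi_+^{(i)}+J_i+\Delta^{(i)}$, where $J_i$ is the jump-generated delta term (of degree at most $i-1$) produced by differentiating the step functions, I would expand each $a_i*\psi^{(i)}$ and $\psi^{(i)}*b_i$ using (\ref{prodf})–(\ref{prods}). The computations needed are all immediate from the product formula and from $\delta*\delta=0$: the left factor sees the left smooth value and the right factor the right smooth value, so $a_i*\Delta^{(i)}=\sum_k d_k\,a_{i-}(x)\delta^{(k+i)}$ and $\Delta^{(i)}*b_i=\sum_k d_k\,b_{i+}(x)\delta^{(k+i)}$; the products of $A_i,B_i$ with the smooth part of $\psi^{(i)}$ produce deltas of degree at most $\deg A_i,\deg B_i$; and the products of the smooth part of $a_i,b_i$ with $J_i$ produce deltas of degree at most $i-1$. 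Because $f$ is smooth, the entire singular part must vanish, giving
\[
\sum_{i=0}^n\sum_k d_k(a_{i-}+b_{i+})(x)\,\delta^{(k+i)}(x) + R = 0 ,
\]
where $R$ collects the jump and $A_i,B_i$ contributions and has degree at most $\max(n-1,M-1)$; here I recall that in the paper's convention ord $\delta^{(k)}=k+1$, so $A_i,B_i$ contribute deltas of degree at most $M-1$. Note that the coefficient combination is exactly $a_{i-}+b_{i+}$, which is precisely why the hypothesis is imposed on $a_{n-}(0)+b_{n+}(0)$.

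The bound then follows from a top-degree argument. If $\Delta\neq 0$, set $D=\deg\Delta$; the first sum contains $d_D\bigl(a_{n-}(0)+b_{n+}(0)\bigr)\delta^{(D+n)}$ and nothing else of degree $D+n$, while $R$ reaches only degree $\max(n-1,M-1)$. Whenever $D+n>\max(n-1,M-1)$ the coefficient of $\delta^{(D+n)}$ must vanish, and since $a_{n-}(0)+b_{n+}(0)\neq 0$ this forces $d_D=0$, a contradiction. For $M\le n$ the inequality $D+n>\max(n-1,M-1)=n-1$ holds for every $D\ge 0$, so $\Delta=0$, which is (i); for $M>n$ it holds as soon as $D\ge M-n$, so $D\le M-n-1$, i.e. ord $\Delta=D+1\le M-n$, which is (ii). I expect the main obstacle to be the careful bookkeeping of the singular part under the non-commutative product — specifically, getting the leading coefficient $d_D(a_{n-}(0)+b_{n+}(0))$ right and tracking orders with the convention ord $\delta^{(k)}=k+1$, which is exactly what makes the borderline case $M=n$ collapse to $\Delta=0$ rather than leaving a spurious one-parameter family of singular solutions.
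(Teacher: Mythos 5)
Your proposal is correct and follows essentially the same route as the paper's proof: decompose $\psi$ via Theorem \ref{SolSR} and Theorem \ref{Theorem_A}, substitute into (\ref{eq0}), use (\ref{prodf})--(\ref{prods}) to see that $\Delta^{(i)}$ and the jump terms couple to the combination $a_{i-}+b_{i+}$, and then kill the top-order delta coefficient using $a_{n-}(0)+b_{n+}(0)\neq 0$ (the paper's eq.~(\ref{imm}) is exactly your displayed identity, with your $R$ being its $\Gamma_i$ and $A_i\psi_+^{(i)}+B_i\psi_-^{(i)}$ terms). Your explicit tracking of the leading coefficient $d_D\bigl(a_{n-}(0)+b_{n+}(0)\bigr)$ and the degree/order bookkeeping match the paper's linear-independence argument, so there is nothing to correct.
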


\begin{proof}

Substituting (\ref{Coeff}) in (\ref{eq0}), we get:
\begin{eqnarray}\label{i}
&& \sum\limits_{i = 0}^n \left(\left(H_- a_{i-}+H a_{i+} \right) * \psi^{(i)} +\psi^{(i)} *\left(H_- b_{i-}+H
b_{i+} \right)\right)\\
&+& \sum\limits_{i = 0}^n\left( A_i * \psi^{(i)} + \psi^{(i)}* B_i\right)= f \, . \nonumber
\end{eqnarray}
Both $\RE^-$ and $\RE^+$ are regular intervals of (\ref{i}), and since (\ref{i}) is sectionally regular, the
equations (\ref{R-}) and (\ref{R+}) have unique smooth solutions for initial data given at $x_0 \in
\RE^-_0$ and $x_0 \in \RE^+_0$, respectively (cf. Definition \ref{Re}). In view of Whitney's extension theorem
\cite{Whi34,Fle77}, these solutions admit smooth extensions to $\RE$. Denote these extensions by $\psi_-$ and
$\psi_+$, respectively.

From Theorem \ref{SolSR}, we conclude that if $\psi$ satisfies (\ref{i}) then necessarily $\psi=\psi_-$ on
$\RE^-$ and $\psi=\psi_+$ on $\RE^+$, for some $\psi_-,\psi_+ \in C^{\infty}(\RE)$ satisfying (\ref{R-}) and
(\ref{R+}), respectively. Hence:
\begin{equation}\label{ii}
 \psi = H_-\psi_- +H \psi_+ + \Delta
\end{equation}
where $\textrm{supp } \Delta \subseteq\left\{0\right\} $. This proves the statement of
eqs.(\ref{GF},\ref{R-},\ref{R+}).

To proceed, let us calculate $\psi^{(i)}$ from (\ref{ii}):
\begin{equation}\label{psii}
\psi^{(i)}=H_-\psi_-^{(i)} +H \psi_+^{(i)} + \Delta^{(i)} + \sum_{j=1}^i \left( \begin{gathered}
  i \hfill \\
  j \hfill \\
\end{gathered} \right) \delta^{(j-1)} \left(\psi_+^{(i-j)}- \psi_-^{(i-j)}    \right) ,\quad i \ge 1
\end{equation}
Substituting (\ref{ii}) and (\ref{psii}) into (\ref{i}) and taking into account (\ref{prods}), we get:
\begin{eqnarray} \label{Y}
& & \sum_{i=0}^n \left(H_-\left(a_{i-}+b_{i-}\right)\psi_-^{(i)} + H \left(a_{i+}+b_{i+}\right)\psi_+^{(i)}\right) \\
& & + \sum_{i=0}^n \left(A_i \psi_+^{(i)} + B_i \psi_-^{(i)}\right) + \sum_{i=0}^n  \left(a_{i-} + b_{i+}\right) \Delta^{(i)}
\nonumber \\
& & +\sum_{i=1}^n \left(a_{i-}+b_{i+}\right) \sum_{j=1}^i \left( \begin{gathered}
  i \hfill \\
  j \hfill \\
\end{gathered} \right) \delta^{(j-1)} \left(\psi_+^{(i-j)}- \psi_-^{(i-j)}    \right) =f \nonumber
\end{eqnarray}
Using (\ref{R-}) and (\ref{R+}), the first term cancels the right hand side, and so:
\begin{equation}\label{imm}
(a_{0-}+b_{0+})\Delta +\sum\limits_{i = 1}^n \left(a_{i-}+b_{i+}\right) \left(\Delta^{(i)}+\Gamma_i\right)+
\sum\limits_{i = 0}^n \left(A_i\psi_+^{(i)}+B_i\psi_-^{(i)}\right) =0
\end{equation}
where
\begin{equation} \label{Omegai}
\Gamma_i=\sum\limits_{j=1}^i \left( \begin{gathered}
  i \hfill \\
  j \hfill \\
\end{gathered}  \right) \, \delta^{(j-1)} \left( \psi_+^{(i-j)}-\psi_-^{(i-j)}\right) \quad , \quad i=1,..,n
\end{equation}
Hence, ord $\Gamma_i \le i$ for all $i=1,..,n$ and so
$$
{\rm max} \left\{ {\rm ord} \, \Gamma_i \, , \, i=1,..,n\right\} \le n \, .
$$
On the other hand, supp $\Delta \subseteq\{0\}$ and so $\Delta=0$ or ord $\Delta \ge 1$. In the latter case, ord
$\Delta^{(n)} \ge n+1$ and also ord $(a_{n-}+b_{n+}) \Delta^{(n)} \ge n+1$ (recall that, by assumption,
$a_{n-}(0)+b_{n+}(0) \not=0$).

Since the terms of different orders in eq.(\ref{imm}) are linearly independent, if $M \le n$ and $\Delta \not=0$
then $(a_{n-}+b_{n+})\Delta^{(n)}$ cannot be cancelled by any other term in (\ref{imm}). Hence, we must have
$\Delta=0$.

On the other hand, if $M > n$ then we must have ord $\Delta^{(n)} \le M$, in which case ord $\Delta \le M-n$,
concluding the proof.

\end{proof}

\begin{remark} \label{remark}

If $a_{n-}(0)+b_{n+}(0)=0$, the situation is more involved, but the conclusions also follow from eq.(\ref{imm}), and the analysis is basically the same. In this case the order of the solutions of
(\ref{eq0}) depends on the properties of the lower order coefficients $a_{i-}+b_{i+}$, $i=0,..,n-1$, and on the derivatives of $a_{n-}+b_{n+}$ at $x=0$.

\end{remark}

\begin{theorem}\label{1ii}

Consider the ODE (\ref{eq0}) with coefficients of the form (\ref{Coeff},\ref{Coef}) and satisfying
$a_{n-}(0)+b_{n+}(0) \not=0$, $a_{n\pm}(0)+b_{n\pm}(0) \not=0$. If (\ref{eq0}) is sectionally regular and $M \le
n$ then every solution is of the form
\begin{equation} \label{3.15}
\psi=H_- \psi_-+H\psi_+
\end{equation}
where $\psi_-,\psi_+ \in \C^\infty$ are solutions of (\ref{R-},\ref{R+}). Moreover, $\psi$ satisfies an
interface condition
\begin{equation}\label{Feq}
\widehat F \psi=0
\end{equation}
where $\widehat F$ is a singular operator of rank, at most, $n$:
\begin{equation}\label{Ffinal}
\widehat F \psi= \sum_{i=0}^{n-1} \delta^{(i)}(x) f_i\left(\psi_\pm(0),...,\psi_\pm^{(n-1)}(0)\right)
\end{equation}
and the functions $f_i:\CO^{2n} \to \CO$ are linear.
\end{theorem}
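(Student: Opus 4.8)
The plan is to obtain the form (\ref{3.15}) directly from Theorem \ref{1i} and then to read off the interface condition from the residual equation already isolated in its proof. Since $M\le n$, part (i) of Theorem \ref{1i} gives $\Delta=0$, so every solution is $\psi=H_-\psi_-+H\psi_+$ with $\psi_\pm\in\C^\infty$ satisfying (\ref{R-}) and (\ref{R+}); this is exactly (\ref{3.15}). Setting $\Delta=0$ in the residual equation (\ref{imm}) leaves
\[
\sum_{i=1}^n\left(a_{i-}+b_{i+}\right)\Gamma_i+\sum_{i=0}^n\left(A_i\psi_+^{(i)}+B_i\psi_-^{(i)}\right)=0,
\]
and I would take the left-hand side as the definition of $\widehat F\psi$. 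By construction any solution satisfies $\widehat F\psi=0$, so the whole task reduces to showing that $\widehat F$ has the announced structure (\ref{Ffinal}).

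First I would carry out an order count to show that $\widehat F\psi$ is supported at $\{0\}$ and involves only $\delta^{(0)},\dots,\delta^{(n-1)}$. Each $\Gamma_i$ from (\ref{Omegai}) has top term $\delta^{(i-1)}$ with $i\le n$; multiplying by the smooth coefficient $a_{i-}+b_{i+}$ and expanding $g(x)\delta^{(k)}=\sum_{l=0}^k\binom{k}{l}(-1)^l g^{(l)}(0)\delta^{(k-l)}$ only lowers the order, so these contribute combinations of $\delta^{(0)},\dots,\delta^{(n-1)}$. Likewise each $A_i,B_i$ has order at most $M\le n$, hence top delta-derivative $\delta^{(n-1)}$, and multiplication by the smooth $\psi_\pm^{(i)}$ again lowers the order. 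Collecting the coefficient of each $\delta^{(i)}$ then yields $\widehat F\psi=\sum_{i=0}^{n-1}\delta^{(i)}f_i$, and since the image lies in the $n$-dimensional span of $\{\delta^{(0)},\dots,\delta^{(n-1)}\}$ the operator has rank at most $n$, establishing that part of the claim at once.

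The substantive step is to show that each coefficient $f_i$ depends only on the $2n$ boundary values $\psi_\pm(0),\dots,\psi_\pm^{(n-1)}(0)$ and does so linearly. The expansions above produce $f_i$ as linear combinations of $\psi_+^{(i+l)}(0)$ and $\psi_-^{(i+l)}(0)$, with indices possibly exceeding $n-1$. Here I would invoke the hypothesis $a_{n\pm}(0)+b_{n\pm}(0)\ne0$: it lets me solve (\ref{R+}) (resp. (\ref{R-})) for $\psi_+^{(n)}$ (resp. $\psi_-^{(n)}$) in a neighbourhood of $0$ and, by differentiating the resulting identity repeatedly and evaluating at $0$, express every $\psi_\pm^{(m)}(0)$ with $m\ge n$ through $\psi_\pm(0),\dots,\psi_\pm^{(n-1)}(0)$ (linearly, up to an inhomogeneous term built from $f$ and its derivatives at $0$). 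Substituting these reductions back collapses the $\psi$-linear part of each $f_i$ to a linear functional on $\CO^{2n}$, giving (\ref{Ffinal}).

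The hardest part is this last reduction, both in the bookkeeping needed to confirm that no $\delta^{(k)}$ with $k\ge n$ survives and in tracking the linear dependence through the repeated use of (\ref{R+}) and (\ref{R-}). Its role also clarifies why the extra hypothesis $a_{n\pm}(0)+b_{n\pm}(0)\ne0$, beyond the condition $a_{n-}(0)+b_{n+}(0)\ne0$ inherited from Theorem \ref{1i}, is indispensable: it is precisely the nondegeneracy of the bulk ODEs at the interface that permits eliminating the higher-order boundary derivatives. The one subtlety to handle carefully is that solving the inhomogeneous equations (\ref{R+}), (\ref{R-}) introduces terms built from $f$ and its derivatives at $0$; the part linear in the boundary data is what constitutes $\widehat F$, and one must check these $f$-contributions do not spoil the stated linear structure.
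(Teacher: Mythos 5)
Your proposal is correct and follows essentially the same route as the paper: invoke Theorem \ref{1i} to get $\Delta=0$, define $\widehat F\psi$ as the left-hand side of the residual equation (\ref{imm}), show by expanding products of smooth functions (resp.\ $\psi_\pm^{(i)}$) with $\delta^{(k)}$ that $\widehat F\psi$ lies in the span of $\delta^{(0)},\dots,\delta^{(n-1)}$ with coefficients linear in $\psi_\pm(0),\dots,\psi_\pm^{(2n-1)}(0)$, and then use (\ref{R-}), (\ref{R+}) together with $a_{n\pm}(0)+b_{n\pm}(0)\neq 0$ to eliminate the derivatives of order $\ge n$ (the paper's only extra ingredient is an inductive closed-form identity for $\Gamma_i$, which your generic order count replaces). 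Your closing caveat about the inhomogeneous $f$-terms entering through this elimination is in fact a point the paper itself passes over silently (it asserts the relations are linear ``because the equations are linear''), so flagging it is a refinement rather than a deviation.
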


\begin{proof}
If $M \le n$, it follows from Theorem \ref{1i}(i) that $\Delta =0$. Hence, the general solution of (\ref{eq0})
is of the form
$$
\psi= H_-\psi_-+H\psi_+
$$
where $\psi_-,\psi_+$ satisfy (\ref{R-},\ref{R+}).

It also follows from (\ref{imm}) that
\begin{equation} \label{F1}
\widehat F \psi := \sum\limits_{i=1}^n (a_{i-} + b_{i+}) \Gamma_i + \sum\limits_{i=0}^n \left(A_i \psi^{(i)}_+ +
B_i \psi^{(i)}_-\right)=0
\end{equation}
where
\begin{eqnarray} \label{Omega}
\Gamma_i &=& \sum\limits_{j=1}^i \left( \begin{gathered}
  i \hfill \\
  j \hfill \\
\end{gathered}  \right) \, \delta^{(j-1)} (x)\left[ \psi_+^{(i-j)}(x)-\psi_-^{(i-j)}(x)\right]  \\
&=& \sum\limits_{j=1}^i \, \delta^{(j-1)} (x)\left[ \psi_+^{(i-j)}(0)-\psi_-^{(i-j)}(0)\right] , \quad i=1,..,n
\nonumber
\end{eqnarray}
In order to prove the second identity in (\ref{Omega}), we proceed by induction. The identity is trivial for
$i=1$. Moreover
\begin{eqnarray}
\Gamma_{i+1} &=& \sum\limits_{j=1}^{i+1} \left( \begin{gathered}
  i+1 \\
  j  \\
\end{gathered}  \right) \, \delta^{(j-1)} (x)\left[ \psi_+^{(i+1-j)}(x)-\psi_-^{(i+1-j)}(x)\right] \nonumber \\
&=& D^{i+1}_x\left[ H\psi_+(x)+H_-\psi_-(x)\right]-\left[H\psi_+^{(i+1)}(x)+H_-\psi_-^{(i+1)}(x)\right] \nonumber \\
&=& D_x \left[H\psi_+^{(i)}(x)+H_-\psi_-^{(i)}(x) + \Gamma_i\right] -\left[H\psi_+^{(i+1)}(x)+H_-\psi_-^{(i+1)}(x)\right] \nonumber \\
&=& \delta(x) \left[\psi_+^{(i)}(0)-\psi_-^{(i)}(0)\right] + D_x(\Gamma_i) \nonumber
\end{eqnarray}
and so, assuming that (\ref{Omega}) is valid for $i$, we get
\begin{eqnarray}
\Gamma_{i+1} &=& \delta(x) \left[\psi_+^{(i)}(0)-\psi_-^{(i)}(0)\right] + \sum\limits_{j=1}^i \,
\delta^{(j+1-1)}
(x)\left[ \psi_+^{(i-j)}(0)-\psi_-^{(i-j)}(0)\right] \nonumber \\
&=& \sum\limits_{j=1}^{i+1} \, \delta^{(j-1)} (x)\left[ \psi_+^{(i+1-j)}(0)-\psi_-^{(i+1-j)}(0)\right] \, .
\nonumber
\end{eqnarray}
Hence, (\ref{Omega}) is valid for all $i \in \N$.

To proceed, we consider the second sum in (\ref{F1}). Since $M \le n$, all the coefficients $A_i,B_i$ are of the
form
$$
A_i= \sum_{k=0}^{n-1} A_{ik} \delta^{(k)}(x) \quad , \quad B_i= \sum_{k=0}^{n-1} B_{ik} \delta^{(k)}(x)
  , \quad A_{ik},B_{ik} \in \CO  , \quad i=0,..,n
$$
and so
\begin{eqnarray} \label{Ai}
A_i (x) \psi_+^{(i)}(x) &=& \sum\limits_{k=0}^{n-1} A_{ik} \delta^{(k)} (x) \psi_+^{(i)}(x)  \nonumber \\
& = &\sum\limits_{k=0}^{n-1} A_{ik} \sum\limits_{j=0}^{k} \left( \begin{gathered}
  k \hfill \\
  j \hfill \\
\end{gathered}  \right) \, (-1)^{j+k} \delta^{(k-j)} (x) \psi_+^{(i+j)}(0) \\
&=&\sum\limits_{j \le k=0}^{n-1} \left( \begin{gathered}
  k \hfill \\
  j \hfill \\
\end{gathered}  \right) \, (-1)^{j+k}  A_{ik} \delta^{(k-j)} (x) \psi_+^{(i+j)}(0) \nonumber
\end{eqnarray}
and likewise
\begin{equation} \label{Bi}
B_i (x) \psi_-^{(i)}(x) = \sum\limits_{j \le k=0}^{n-1} \left( \begin{gathered}
  k \hfill \\
  j \hfill \\
\end{gathered}  \right) \, (-1)^{j+k}  B_{ik} \delta^{(k-j)} (x) \psi_-^{(i+j)}(0)
\end{equation}
It follows from (\ref{Omega},\ref{Ai},\ref{Bi}) that the terms in (\ref{F1}) satisfy:
$$
{\rm supp} \, \Gamma_i \, , \, {\rm supp} \, (A_i \psi_+^{(i)}) \, , \, {\rm supp} \, (B_i \psi_-^{(i)})
\subseteq \{0\}
$$
$$
{\rm ord}\,  \Gamma_i \, , \, {\rm ord} \, (A_i \psi_+^{(i)}) \, , \, {\rm ord}\,  (B_i \psi_-^{(i)}) \le n
$$
and that $\widehat F \psi(x)$ is linear (and exclusively) dependent on the entries
$$
\psi_\pm(0),...,\psi_\pm^{(n-1)}(0),\psi_\pm^{(n)}(0),...,\psi_\pm^{(2n-1)}(0) \, .
$$
Hence, $\widehat F$ is a rank
$n$ linear operator of the form
\begin{equation} \label{F2}
\widehat F \psi=\sum_{i=0}^{n-1} \delta^{(i)}(x) \widetilde
f_i(\psi_\pm(0),...,\psi_\pm^{(n-1)}(0),\psi_\pm^{(n)}(0),...,\psi_\pm^{(2n-1)}(0))
\end{equation}
where $\widetilde f_i:\CO^{4n} \longrightarrow \CO$ are linear functions.

Finally, the equations (\ref{R-}) and (\ref{R+}) can be used to express $\psi_\pm^{(n)}(0)$ (and all its
derivatives up to $\psi_\pm^{(2n-1)}(0)$) in terms of $\psi_\pm(0),...,\psi_\pm^{(n-1)}(0)$. This is possible
because $a_{n\pm}(0)+b_{n\pm}(0) \not=0$. Moreover, these relations are linear (because eqs.(\ref{R-},\ref{R+})
are linear). It follows that $\widehat F \psi$ can be re-written in the form
$$
\widehat F \psi(x)=\sum_{i=0}^{n-1} \delta^{(i)}(x) f_i(\psi_\pm(0),...,\psi_\pm^{(n-1)}(0))
$$
where $f_i:\CO^{2n} \longrightarrow \CO$ are linear functions, which concludes the proof.
\end{proof}

The interface conditions (\ref{Feq}, \ref{Ffinal}) can be written in the form:
\begin{equation} \label{FC}
f_i(\psi_\pm(0),...,\psi_\pm^{(n-1)}(0))=0 \quad , \quad i=1,...,n
\end{equation}
yielding a system of $n$ linear equations for the $2n$ unknowns
$$
\overline{\psi_-(0)}=(\psi_-(0),...,\psi^{(n-1)}_-(0))^T \quad , \quad 
\overline{\psi_+(0)}=(\psi_+(0),...,\psi^{(n-1)}_+(0))^T \, .
$$
The equations (\ref{FC}) can be re-written as:
\begin{equation} \label{Meq}
{\bf A} \overline{\psi_-(0)} = {\bf B} \overline{\psi_+(0)}
\end{equation}
where $\bf A,\bf B$ are $n \times n$ complex valued matrices. We will use this form of the interface conditions to study several of its properties. 

The conditions (\ref{Meq}) can be {\it separating} or {\it interacting}. In the separating case they reduce to a
set of conditions for $\overline{\psi_+(0)}$ and another set of conditions for $\overline{\psi_-(0)}$. In this
case the values of $\overline{\psi_+(0)}$ and $\overline{\psi_-(0)}$ are independent of each other. In the
interacting case the conditions relate the values of $\overline{\psi_+(0)}$ with those of
$\overline{\psi_-(0)}$. If they do not completely fix the values of $\overline{\psi_-(0)}$ in terms of those of
$\overline{\psi_+(0)}$ or vice-versa, we say that the conditions are only {\it partially interacting}.

Let, as usual, $\mbox{Ker} \,{\bf X}$ and  $\mbox{Ran} \,{\bf X}$ denote the kernel and the range of the matrix
$\bf X$. Then

\begin{theorem} \label{1iii}
Consider the interface conditions (\ref{Meq}) and let $\W= \mbox{Ran} \,{\bf A} \, \cap \mbox{Ran} \,{\bf B}$.
\begin{enumerate}
\item [(1)] If $\W=\{0 \}$ then the conditions (\ref{Meq}) are separating.

\item [(2)] If $\W=\CO^n$ then the conditions (\ref{Meq}) are interacting.

\item [(3)] If $\W \not=\{ 0 \}$ and $\W \not= \CO^n$ then the conditions (\ref{Meq}) are {\it partially
interacting}.
\end{enumerate}

\end{theorem}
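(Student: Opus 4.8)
The plan is to treat the statement as a pure linear-algebra question about the homogeneous system (\ref{Meq}). Writing $u=\overline{\psi_-(0)}$ and $v=\overline{\psi_+(0)}$, I would study the solution set
$$
S=\{(u,v)\in\CO^n\times\CO^n : {\bf A}u={\bf B}v\}.
$$
The single observation that drives everything is that for any $(u,v)\in S$ the common value $w:={\bf A}u={\bf B}v$ lies in $\mbox{Ran}\,{\bf A}\cap\mbox{Ran}\,{\bf B}=\W$, and conversely every $w\in\W$ is realised as ${\bf A}u={\bf B}v$ for a suitable $u$ (a coset of $\mbox{Ker}\,{\bf A}$) and $v$ (a coset of $\mbox{Ker}\,{\bf B}$). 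Thus $S$ is completely described by $\W$ together with the two kernels. I would then phrase the three informal notions as structural statements about $S$: \emph{separating} means $S$ is a product $S_-\times S_+$ of independent constraints on $u$ and on $v$; \emph{(fully) interacting} means $S$ is the graph of a linear bijection of $\CO^n$ relating $u$ and $v$; and \emph{partially interacting} is the intermediate situation.

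For (1), $\W=\{0\}$ forces $w=0$, so any solution satisfies ${\bf A}u=0$ and ${\bf B}v=0$ simultaneously and independently; hence $S=\mbox{Ker}\,{\bf A}\times\mbox{Ker}\,{\bf B}$ is a product and the conditions separate. For (2), $\W=\CO^n$ means $\mbox{Ran}\,{\bf A}=\mbox{Ran}\,{\bf B}=\CO^n$, so both square matrices are invertible; then ${\bf A}u={\bf B}v$ is equivalent to $u={\bf A}^{-1}{\bf B}v$, a linear bijection of $\CO^n$, so each of $u,v$ completely determines the other and the conditions are interacting.

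For (3) — which I expect to be the delicate case — I would argue in two steps. First, non-separation: since $\W\neq\{0\}$, pick $0\neq w\in\W$ with ${\bf A}u_0={\bf B}v_0=w$; because $(0,0)\in S$ always, a product structure $S=S_-\times S_+$ would force $(u_0,0)\in S$, i.e.\ ${\bf A}u_0=0$, contradicting $w\neq0$. Hence $S$ is not a product and the conditions are interacting. Second, failure of complete determination: $\W\neq\CO^n$ prevents both ranges from filling $\CO^n$, so at least one of ${\bf A},{\bf B}$ is singular; the corresponding nontrivial kernel shows that $v$ does not determine $u$, or $u$ does not determine $v$, uniquely over all of $\CO^n$, so the correspondence is not the bijection of case (2). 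Combining the two steps places (3) strictly between (1) and (2), i.e.\ partially interacting.

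The main obstacle is not any single computation but pinning down the correspondence between the informal notions (separating / interacting / partially interacting) and precise linear-algebra conditions, and in particular checking in (3) that interaction genuinely occurs while stopping short of the full bijective determination of (2); the identity ${\bf A}u={\bf B}v\in\W$, together with a dimension count on $\mbox{Ker}\,[{\bf A}\mid-{\bf B}]$, is what makes this transparent.
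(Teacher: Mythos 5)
Your proposal is correct and follows essentially the same route as the paper: case (1) reduces to ${\bf A}u=0$, ${\bf B}v=0$; case (2) uses invertibility of both matrices to get mutual determination; and case (3) combines a nontrivial kernel (failure of complete determination) with a nonzero element of $\W$ (genuine correlation). Your non-separation argument in (3) — using $(0,0)\in S$ to show a product structure would force ${\bf A}u_0=0$ — is just a streamlined special case of the paper's ``mixed pairs'' argument, where the second pair is taken to be the trivial solution.
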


\begin{proof}

(1) If $\W =\{0\}$ then
$$
{\bf A} \overline{\psi_-(0)} = {\bf B} \overline{\psi_+(0)} \Longleftrightarrow \left\{
\begin{array}{l}
{\bf A} \overline{\psi_-(0)} = 0 \\
\\
{\bf B} \overline{\psi_+(0)} =0
\end{array} \right.
$$
and the conditions are separating.

(2) If $\W=\CO^n$ then  $\mbox{Ran} \,{\bf A} = \, \mbox{Ran} \, {\bf {B}}$ $= \CO^n \Longleftrightarrow
\mbox{Ker} \,{\bf A} = \mbox{Ker} \, {\bf B} =\{0\}$. Hence, both ${\bf A}$ and ${\bf B}$ are invertible. It
follows that the values of $\overline{\psi_-(0)}$ and $\overline{\psi_+(0)}$ are completely and uniquely
determined from each other:
$$
{\bf A} \overline{\psi_-(0)} = {\bf B} \overline{\psi_+(0)} \Longleftrightarrow  \overline{\psi_-(0)} ={\bf
A}^{-1} {\bf B} \overline{\psi_+(0)} \Longleftrightarrow  \overline{\psi_+(0)} ={\bf B}^{-1} {\bf A}
\overline{\psi_-(0)}.
$$

(3) If $\W \not=\CO^n$ then either $\mbox{Ran} \, {\bf A} $ or  $\mbox{Ran} \,{\bf B}$ (or both) is not $\CO^n$.
Assume that $\mbox{Ran} \,{\bf A} \not= \CO^n$. Then  $\mbox{Ker} \,{\bf A} \not=\{0\}$ and if
$(\overline{\psi_-(0)},\overline{\psi_+(0)})$ is a solution of (\ref{Meq}) then
$(\overline{\psi_-(0)}+\overline{\xi_-(0)},\overline{\psi_+(0)})$ is also a solution for all
$\overline{\xi_-(0)} \in  \mbox{Ker} \, {\bf A}$. Hence, the values of $\overline{\psi_-(0)}$ are not completely
fixed by those of $\overline{\psi_+(0)}$ (and vice-versa, if  $\mbox{Ker} \,{\bf B} \not=\{0\}$).

On the other hand, since $\W \not=\{0\}$ and $\W$ is a linear space, there are two different vectors
$\overline{X}, \overline{Y} \in \W$ and two different pairs $(\overline{\psi_-(0)},\overline{\psi_+(0)})$ and
$(\overline{\xi_-(0)},\overline{\xi_+(0)})$ such that
$$
\left\{
\begin{array}{l}
{\bf A} \overline{\psi_-(0)} = \overline{X} \\
\\
{\bf B} \overline{\psi_+(0)} =\overline{X}
\end{array} \right. \quad , \quad
\left\{
\begin{array}{l}
{\bf A} \overline{\xi_-(0)} = \overline{Y} \\
\\
{\bf B} \overline{\xi_+(0)} =\overline{Y}
\end{array} \right.
$$
Then, of course, $(\overline{\psi_-(0)},\overline{\psi_+(0)})$ and $(\overline{\xi_-(0)},\overline{\xi_+(0)})$
are solutions of (\ref{Meq}), but $(\overline{\psi_-(0)},\overline{\xi_+(0)})$ and
$(\overline{\xi_-(0)},\overline{\psi_+(0)})$ are not. Hence, the values of $\overline{\psi_-(0)}$ and
$\overline{\psi_+(0)}$ that solve (\ref{Meq}) are not completely uncorrelated. However, since they are not
completely fixed by each other, the conditions (\ref{Meq}) are only partially interacting.

\end{proof}

The conditions $\widehat F \psi(x)=0 \Longleftrightarrow {\bf A} \overline{\psi_-(0)} = {\bf B} \overline{\psi_+(0)}$ also determine whether the solution of the IVP (\ref{eq0},\ref{IC}) exists
and is unique. Let
\begin{eqnarray}
\K_ {{\bf A}} & = & \{\overline{X} \in \CO^n : \, {\bf A} \overline{X} \in {\rm \mbox{Ran}} \, {\bf B}\}
\nonumber \\
\K_{\bf B} & = & \{\overline{X} \in \CO^n : \, {\bf B} \overline{X} \in {\rm \mbox{Ran}} \, {\bf A}\} \, .
\end{eqnarray}
We have, of course
$$
\rm \mbox{Ker} \, {\bf A} \subseteq \K_{\bf A} \quad , \quad \rm \mbox{Ker} \, {\bf B} \subseteq \K_{\bf B} \, .
$$
Then

\begin{theorem} \label{1iv}
Consider a sectionally regular ODE (\ref{eq0}) with coefficients (\ref{Coeff},\ref{Coef}) satisfying
$a_{n-}(0)+b_{n+}(0) \not=0$, $a_{n\pm}(0)+b_{n\pm}(0) \not=0$ and $M\le n$. In view of Theorem \ref{1ii}, its solutions satisfy the interface conditions (\ref{Feq},  \ref{Ffinal}). Let us write these conditions in the form (\ref{Meq}).

Consider also the initial conditions:
\begin{equation}\label{IC-}
\overline{\psi(x_0)}=\overline{C} \quad , \quad x_0 < 0 \, .
\end{equation}
and let $\psi_-$ be the solution of the {\it associated} IVP (\ref{R-}) that satisfies
$\overline{\psi_-(x_0)}=\overline{C}$. Then:
\begin{enumerate}
\item [(1)] If $\overline{\psi_-(0)} \notin \K_{\bf A}$ then the IVP (\ref{eq0},\ref{IC-}) has no solutions.

\item [(2)] If $\overline{\psi_-(0)} \in \K_{\bf A}$ then the solutions of the IVP (\ref{eq0},\ref{IC-}) form an
affine space of dimension dim ($\mbox{Ker} \,{\bf B}$).

\item [(3)] The solution of the IVP (\ref{eq0},\ref{IC-}) exists and is unique for arbitrary initial conditions
given at $x_0 < 0$ iff  $\mbox{Ker} \,{\bf B} = \{0\}$.

\end{enumerate}

Equivalent statements can be made for the IVP (\ref{eq0}) with initial conditions
\begin{equation}\label{IC+}
\overline{\psi(x_0)}=\overline{C} \quad , \quad x_0 > 0 \, .
\end{equation}
In this case, let $\psi_+$ be the solution of the {\it associated} IVP (\ref{R+}) that satisfies
$\overline{\psi_+(x_0)}=\overline{C}$. Then:

\begin{enumerate}

\item [(4)] If $\overline{\psi_+(0)} \notin \K_{\bf B}$ then the IVP (\ref{eq0},\ref{IC+}) has no solutions.

\item [(5)] If $\overline{\psi_+(0)} \in \K_{\bf B}$ then the solutions of the IVP (\ref{eq0},\ref{IC+}) form an
affine space of dimension dim ($\mbox{Ker} \,{\bf A}$).

\item [(6)] The solution of the IVP (\ref{eq0},\ref{IC+}) exists and is unique for all initial conditions at
$x_0 > 0$ iff  $\mbox{Ker} \,{\bf A} = \{0\}$.

\end{enumerate}

\end{theorem}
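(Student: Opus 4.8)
```latex
The plan is to exploit the structure already established in Theorem \ref{1ii}: every solution of the sectionally regular ODE is of the form $\psi=H_-\psi_-+H\psi_+$, where $\psi_\pm$ are the unique smooth solutions of (\ref{R-}),(\ref{R+}), and where $\psi$ solves (\ref{eq0}) if and only if the interface condition ${\bf A}\,\overline{\psi_-(0)}={\bf B}\,\overline{\psi_+(0)}$ holds. Thus the IVP (\ref{eq0}),(\ref{IC-}) is reduced to a purely finite-dimensional linear-algebra problem: given the data at $x_0<0$, the function $\psi_-$ is completely determined on $\RE_0^-$ (hence its extension, and in particular the vector $\overline{\psi_-(0)}$, is fixed) by existence and uniqueness for the smooth linear IVP (\ref{R-}), which is available because $a_{n-}(0)+b_{n+}(0)\neq0$ and sectional regularity hold. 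All the freedom in constructing $\psi$ therefore resides in choosing $\overline{\psi_+(0)}$, which in turn determines $\psi_+$ uniquely. So the IVP has a solution precisely when there exists $\overline{\psi_+(0)}\in\CO^n$ with ${\bf B}\,\overline{\psi_+(0)}={\bf A}\,\overline{\psi_-(0)}$, and the solution set is parametrized by such $\overline{\psi_+(0)}$.

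First I would make the reduction explicit. Fix $\overline{C}$ and $x_0<0$; by sectional regularity the associated smooth IVP (\ref{R-}) with $\overline{\psi_-(x_0)}=\overline{C}$ has a unique solution, giving a well-defined vector $\overline{\psi_-(0)}$. Then I would observe that any candidate solution must have this same $\psi_-$ on $\RE^-$ (by Theorem \ref{SolSR}), so producing a solution of (\ref{eq0}),(\ref{IC-}) is equivalent to producing $\overline{\psi_+(0)}$ satisfying the linear system ${\bf B}\,\overline{\psi_+(0)}={\bf A}\,\overline{\psi_-(0)}$.

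Next I would read off the three claims from elementary linear algebra. For (1): the system ${\bf B}\,\overline{\psi_+(0)}={\bf A}\,\overline{\psi_-(0)}$ is solvable in $\overline{\psi_+(0)}$ if and only if ${\bf A}\,\overline{\psi_-(0)}\in\mbox{Ran}\,{\bf B}$, which is exactly the condition $\overline{\psi_-(0)}\in\K_{\bf A}$; if this fails there is no solution. For (2): when $\overline{\psi_-(0)}\in\K_{\bf A}$, the solution set for $\overline{\psi_+(0)}$ is a coset of $\mbox{Ker}\,{\bf B}$, hence an affine space of dimension $\dim(\mbox{Ker}\,{\bf B})$; since the map $\overline{\psi_+(0)}\mapsto\psi$ is a bijection between admissible vectors and solutions, the solution space of the IVP inherits this affine structure and dimension. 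For (3): uniqueness for \emph{all} $\overline{C}$ at $x_0<0$ is equivalent to the coset being a single point for every attainable $\overline{\psi_-(0)}$, i.e. $\mbox{Ker}\,{\bf B}=\{0\}$; conversely, if $\mbox{Ker}\,{\bf B}=\{0\}$ the reachable data always lie in $\K_{\bf A}$ because one can realize any $\overline{\psi_-(0)}$ by varying $\overline{C}$ — so I would need the remark that the map $\overline{C}\mapsto\overline{\psi_-(0)}$ is surjective onto $\CO^n$, which follows from the invertibility of the fundamental matrix (flow) of the homogeneous part of (\ref{R-}). The statements (4)--(6) are the mirror image with the roles of ${\bf A},{\bf B}$ and $\psi_\pm$ interchanged and $x_0>0$; I would remark that the proof is identical after swapping indices.

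The main obstacle is not any single computation but making precise the correspondence between solutions $\psi$ of the IVP and vectors $\overline{\psi_+(0)}$, and in particular justifying surjectivity of $\overline{C}\mapsto\overline{\psi_-(0)}$ needed for the ``iff'' in (3). The subtlety is that the IVP data are prescribed at $x_0<0$ whereas the interface condition lives at $0$, so one must propagate via the smooth flow of the regular equation (\ref{R-}) on $\RE_0^-$; sectional regularity and $a_{n-}(0)+b_{n+}(0)\neq0$ guarantee that this flow is a linear isomorphism of initial-data space, which is what both transfers $\overline{C}$ to $\overline{\psi_-(0)}$ and ensures every value of $\overline{\psi_-(0)}$ is attainable. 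Once this isomorphism is in place, the existence/dimension/uniqueness statements reduce verbatim to the solvability theory of ${\bf B}\,\overline{\psi_+(0)}={\bf A}\,\overline{\psi_-(0)}$.
```
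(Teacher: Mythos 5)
Your overall route is the same as the paper's: use Theorem \ref{1ii} (together with sectional regularity and Theorem \ref{SolSR}) to reduce the IVP to the solvability of the finite-dimensional linear system (\ref{Meq}), ${\bf B}\,\overline{\psi_+(0)}={\bf A}\,\overline{\psi_-(0)}$, with $\overline{\psi_-(0)}$ fixed by the initial data, and then read off the conclusions by linear algebra. Your treatment of (1), (2) and, by symmetry, (4)--(6) is correct; indeed your bijection argument for the affine structure in (2) is slightly cleaner than the paper's explicit construction with the homogeneous solutions $\xi_+$.

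The gap is in the backward direction of (3). You write that ``if $\mbox{Ker}\,{\bf B}=\{0\}$ the reachable data always lie in $\K_{\bf A}$ because one can realize any $\overline{\psi_-(0)}$ by varying $\overline{C}$,'' and you single out surjectivity of the flow map $\overline{C}\mapsto\overline{\psi_-(0)}$ as the key ingredient. This is a non sequitur: surjectivity of that map makes the existence requirement \emph{harder}, not easier --- it says you must handle every $\overline{\psi_-(0)}\in\CO^n$, i.e.\ you need $\K_{\bf A}=\CO^n$ --- and it gives no information about membership in $\K_{\bf A}$. The step that actually closes the argument, and which the paper states explicitly, is that ${\bf B}$ is a \emph{square} matrix, so
$\mbox{Ker}\,{\bf B}=\{0\}\Rightarrow\mbox{Ran}\,{\bf B}=\CO^n\Rightarrow{\bf A}\overline{X}\in\mbox{Ran}\,{\bf B}$ for every $\overline{X}\in\CO^n$, i.e.\ $\K_{\bf A}=\CO^n$. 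With that one line, existence for all data follows from (2), and uniqueness from $\dim\mbox{Ker}\,{\bf B}=0$; for the forward direction, existence for any single $\overline{C}$ plus uniqueness already forces $\dim\mbox{Ker}\,{\bf B}=0$ via (2). Note finally that the flow surjectivity you emphasize is never needed for the ``iff'' in (3): it would only be required to prove the stronger intermediate claim that existence for all $\overline{C}$ implies $\K_{\bf A}=\CO^n$ (an assertion the paper makes in passing), and the final equivalence does not use that direction.
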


\begin{proof}
Recall from Theorem \ref{1ii} that if the conditions of Theorem \ref{1iv} hold, and $\psi$ is a solution of
(\ref{eq0}) then it satisfies (\ref{3.15},\ref{R-},\ref{R+}) and the interface condition (\ref{Meq}). Moreover,
the {\it associated} IVPs (\ref{R-},\ref{R+}) have unique smooth solutions for arbitrary initial conditions
given at $x_0 \le 0$ and $x_0 \ge 0$, respectively. Then:

(1) If $\overline{\psi_-(0)} \notin \K_{\bf A}$ then ${\bf A} \overline{\psi_-(0)} \notin  \mbox{Ran} \, {\bf
B}$ and so
$$
\not\exists \, \overline{\psi_+(0)} \in \CO^n: \quad {\bf B} \overline{\psi_+(0)} = {\bf A}
\overline{\psi_-(0)}\, .
$$
Hence, the condition (\ref{Meq}) doesn't have a solution and so the IVP (\ref{eq0},\ref{IC-}) has no solutions satisfying
the initial condition $\overline{\psi(x_0)}= \overline{C}$.

(2) If $\overline{\psi_-(0)} \in \K_{\bf A}$ then ${\bf A} \overline{\psi_-(0)} \in  \mbox{Ran} \,{\bf B}$ and
so
\begin{equation}\label{Xeq}
\exists \, \overline{X} \in \CO^n: \quad {\bf B}\overline{X} = {\bf A} \overline{\psi_-(0)} \, .
\end{equation}
Let $\psi_+$ be the solution of the IVP (\ref{R+}) with initial condition $\overline{\psi_+(0)}=\overline{X}$.
Then
$$
\psi= H_- \psi_- + H \psi_+
$$
is a global solution of the IVP (\ref{eq0},\ref{IC-}).

Let now $\overline{Y} \in  \mbox{Ker} \, {\bf B}$. Then
$\overline{X}+\overline{Y}$ is also a solution of (\ref{Xeq}). Let
${\xi_+}$ be the solution of the IVP (\ref{R+}) with $f=0$ that
satisfies $\overline{\xi_+(0)}=\overline{Y}$. Then
$$
\tilde\psi= H_- \psi_- + H (\psi_++\xi_+)
$$
is also a global solution of the IVP (\ref{eq0},\ref{IC-}). Hence, the dimension of the affine space of
solutions of the IVP (\ref{eq0},\ref{IC-}) is dim ($\mbox{Ker} \,{\bf B}$).

(3) It follows from the previous point (2) that the IVP (\ref{eq0},\ref{IC-}) has solutions for all initial
conditions $\overline{\psi(x_0)}=\overline{C}$ given at $x_0 < 0$ iff $\K_{\bf A} = \CO^n$. Moreover, the
solution is unique iff  $\mbox{Ker} \,{\bf B}=\{0\}$. It turns out that the latter condition also implies the
former one:
$$
{\rm \mbox{Ker}}\, {\bf B}=\{0\} \Longrightarrow {\rm \mbox{Ran}}\, {\bf B}=\CO^n \Longrightarrow {\bf A}
\overline{X}\in {\rm \mbox{Ran}}\, {\bf B} , \, \forall \overline{X} \in \CO^n \Longrightarrow \K_{\bf A}=\CO^n
$$
Hence, the solution of the IVP (\ref{eq0},\ref{IC-}) exists and is unique for all initial data given at $x_0 <0$
iff ${\rm \mbox{Ker}} {\bf B}=\{0\}$.

(4), (5) and (6): The proof is identical to that of (1), (2) and (3).

\end{proof}

\begin{corollary}\label{Corollary-1iv}

If the conditions (3) and (6) in the previous Theorem are both satisfied, then the interface conditions (\ref{Meq}) are interacting, and vice-versa. Hence
interacting interface conditions (\ref{Meq}) are necessary and sufficient for the existence and uniqueness of
the solutions of the IVP (\ref{eq0}) with initial conditions given at an arbitrary $x_0 \in \RE\backslash \{0\}$.

\end{corollary}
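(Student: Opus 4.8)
The plan is to derive the statement purely as a corollary of the characterizations already obtained, by translating each of the conditions (3) and (6) of Theorem \ref{1iv} into a statement about the kernels of ${\bf A}$ and ${\bf B}$, and then matching the resulting condition with the trichotomy of Theorem \ref{1iii}.

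First I would record the two equivalences furnished by Theorem \ref{1iv}: condition (3) holds (existence and uniqueness for arbitrary initial data at every $x_0<0$) if and only if $\mbox{Ker}\,{\bf B}=\{0\}$, and condition (6) holds (existence and uniqueness for arbitrary initial data at every $x_0>0$) if and only if $\mbox{Ker}\,{\bf A}=\{0\}$. Hence (3) and (6) hold simultaneously exactly when $\mbox{Ker}\,{\bf A}=\mbox{Ker}\,{\bf B}=\{0\}$. Since ${\bf A}$ and ${\bf B}$ are $n\times n$ matrices, injectivity is equivalent to surjectivity, so this is in turn equivalent to $\mbox{Ran}\,{\bf A}=\mbox{Ran}\,{\bf B}=\CO^n$, and therefore to $\W=\mbox{Ran}\,{\bf A}\cap\mbox{Ran}\,{\bf B}=\CO^n$. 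For the converse direction of this last step one uses the inclusions $\W\subseteq\mbox{Ran}\,{\bf A}$ and $\W\subseteq\mbox{Ran}\,{\bf B}$, so that $\W=\CO^n$ forces both ranges to be the whole space and hence both kernels to be trivial.

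Next I would invoke Theorem \ref{1iii}. Its three cases are exhaustive and mutually exclusive because $\W$, being a subspace of $\CO^n$, is either $\{0\}$, all of $\CO^n$, or a proper nonzero subspace; the (fully) interacting case is precisely case (2), i.e. $\W=\CO^n$. Combining this with the previous paragraph, the interface conditions (\ref{Meq}) are interacting if and only if conditions (3) and (6) both hold, which is the first assertion of the corollary.

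Finally, for the concluding sentence I would observe that an arbitrary point $x_0\in\RE\backslash\{0\}$ is either negative or positive, and that since $0$ is the only singular point the data prescribed at $x_0<0$ determine only $\psi_-$ while data at $x_0>0$ determine only $\psi_+$. Thus existence and uniqueness of the IVP (\ref{eq0}) for every $x_0\in\RE\backslash\{0\}$ and every $\overline{C}$ is exactly the conjunction of (3) and (6), so by the equivalence just proved this holds if and only if (\ref{Meq}) is interacting. I do not anticipate a genuine obstacle here: the argument is a bookkeeping of linear-algebra equivalences, and the only point requiring a moment's care is the bidirectional passage between the trivial-kernel conditions and $\W=\CO^n$, which rests on the squareness of ${\bf A},{\bf B}$ together with the inclusion $\W\subseteq\mbox{Ran}\,{\bf A}\cap\mbox{Ran}\,{\bf B}$.
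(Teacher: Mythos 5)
Your proposal is correct and follows exactly the route the paper intends: the corollary is stated without a separate proof precisely because it is the conjunction of Theorem \ref{1iv}(3),(6) (equivalent to $\mbox{Ker}\,{\bf B}=\{0\}$ and $\mbox{Ker}\,{\bf A}=\{0\}$), the square-matrix equivalence of trivial kernel and full range giving $\W=\CO^n$, and the trichotomy of Theorem \ref{1iii} identifying $\W=\CO^n$ with the (fully) interacting case. Your only added care --- noting that the converse direction needs the mutual exclusivity of the three cases of Theorem \ref{1iii} and the inclusions $\W\subseteq\mbox{Ran}\,{\bf A},\ \W\subseteq\mbox{Ran}\,{\bf B}$ --- is exactly the bookkeeping the paper leaves implicit.
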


\begin{remark} 
 
 The condition (\ref{Meq}) is, in general, asymmetric. For instance, we may have ${\rm \mbox{Ker} \,}
{\bf A}=\{0\}$ but ${\rm \mbox{Ker} \,} {\bf B} \not=\{0\}$, in which case $\overline{\psi_-(0)}$ is completely
determined by $\overline{\psi_+(0)}$ but not vice-versa. Consequently, in this case, the IVP (\ref{eq0}) has
unique solutions for arbitrary initial conditions given at $x_0 >0$, but not for initial conditions given at
$x_0<0$. In the latter case, a solution may not exist, and if exists, it may not be unique (see the example in the next section).

\end{remark}

\section{Simple Example}

In this section we study the ODE with singular coefficients
\begin{equation}\label{ODE2}
\psi''+(k^2 + \alpha \delta''') * \psi =0 \quad , \quad k \in \RE \backslash \{0\}
\end{equation}
in order to illustrate some of the results stated in Theorems \ref{1i}, \ref{1iii} and \ref{1iv}. Here
$\alpha$ and $k$ are real parameters and $\delta'''$ denotes $\delta^{(3)}(x)$. Hence $n=2$ and (if $\alpha
\not=0$) $M=4$. Notice that this ODE does not satisfy all the conditions of Theorems \ref{1iii} and \ref{1iv} (because $M > n$). However, we will see below that the interface condition for (\ref{ODE2}) can still be written in the form (\ref{Feq},\ref{Ffinal}), and thus the results of Theorems \ref{1iii} and \ref{1iv} are still valid for (\ref{ODE2}).  

Let us proceed. On $\RE^-$ and $\RE^+$, (\ref{ODE2}) reduces to
\begin{equation}\label{ODE1}
\psi''+k^2  \psi =0 \, .
\end{equation}
Hence (\ref{ODE2}) is sectionally regular (cf. Definition \ref{Re}) and its general solution can be written in
the form
\begin{equation} \label{SODE2}
\psi =H_- \psi_- + H \psi_+ + \Delta
\end{equation}
where supp $\Delta \subseteq \{0\}$ and $\psi_-, \psi_+$ satisfy (\ref{ODE1}) on $\RE$ (it is not necessary that
$\psi_\pm$ are defined on $\RE$, but it simplifies the presentation). Substituting (\ref{SODE2}) in
(\ref{ODE2}) we get:
\begin{equation}\label{4.4}
 H_-\left[ \psi''_-+k^2\psi_-\right]+ H\left[ \psi''_++k^2\psi_+\right] + \Delta''+k^2 \Delta + \Gamma_2 +
\alpha  \delta''' \psi_+ =0
\end{equation}
where
\begin{eqnarray}\label{Ome}
\Gamma_2 &=&  2  \left(\psi'_+(x)-\psi_-'(x)\right) \delta + \left(\psi_+(x)-\psi_-(x)\right) \delta' \nonumber
\\
&=& \left(\psi'_+(0)-\psi_-'(0)\right)\delta + \left(\psi_+(0)-\psi_-(0)\right)\delta'
\end{eqnarray}
in accordance with (\ref{Omegai}).
Taking into account that
\begin{equation}\label{Omegaa}
\delta''' \psi_\pm(x) = \delta''' \psi_\pm(0) -3 \delta'' \psi'_\pm(0) + 3\delta' \psi_\pm''(0)-\delta \psi_\pm'''(0)
\end{equation}
and separating the terms that depend on the delta distribution from those that do not, we get from (\ref{4.4}):
\begin{equation} \label{ODE+-}
\psi''_-+k^2\psi_- =0 \qquad  , \qquad  \psi''_++k^2\psi_+=0
\end{equation}
and
\begin{eqnarray} \label{ODE3}
&& \Delta''+k^2 \Delta + \delta \left[ \psi_+'(0)-\psi_-'(0) - \alpha \psi_+'''(0)  \right] \nonumber \\
&+& \delta' \left[ \psi_+(0)-\psi_-(0) +3 \alpha \psi_+''(0)  \right] \nonumber \\
&+& \delta'' \left[ - 3\alpha \psi_+'(0)  \right] + \delta''' \left[ \alpha \psi_+(0) \right]=0
\end{eqnarray}
The terms of order higher than two yield
\begin{eqnarray}\label{h}
&& \Delta''= -\delta'' \left[ - 3\alpha \psi_+'(0)  \right] - \delta''' \left[ \alpha \psi_+(0) \right]
\nonumber \\
&\Longrightarrow &  \Delta = -\delta \left[- 3\alpha \psi_+'(0)  \right] - \delta' \left[ \alpha \psi_+(0)
\right] \, .
\end{eqnarray}
Substituting $\Delta$ and $\Delta''$ into (\ref{ODE3}) and taking (\ref{ODE+-}) into account, we get the explicit form
of the interface operator $\widehat F$
$$
\widehat F \psi= \delta \left[ (4 \alpha k^2+1) \psi_+'(0)-\psi_-'(0)  \right]  + \delta' \left[ (-4\alpha
k^2+1) \psi_+(0)-\psi_-(0)  \right]
$$
The interface condition can then be written
$$
\widehat F \psi=0 \Longleftrightarrow {\bf A} \overline{\psi_-(0)}={\bf B} \overline{\psi_+(0)}
$$
where $\overline{\psi_\pm(0)}=\left[\begin{array}{c}\psi_\pm(0) \\
\psi'_\pm (0) \end{array} \right]$ and
\begin{equation}\label{GM}
\bf A= \left[ \begin{array}{rr} 0 & 1 \\
1 & 0
\end{array} \right] \quad , \quad
\bf B= \left[ \begin{array}{cc} 0 & 4 \alpha k^2 + 1 \\
1-4 \alpha k^2 & 0
\end{array} \right]
\end{equation}

We now consider two different cases:

\subsubsection{First Case: Interacting conditions}\label{sub}

Let $k=1$, $\alpha= 1/8$. Then
\begin{equation}\label{GM1}
\bf A= \left[ \begin{array}{cc} 0 & 1 \\
1 & 0
\end{array} \right] \quad , \quad
\bf B= \left[ \begin{array}{cc} 0 & 3/2 \\
1/2 & 0
\end{array} \right] \, ,
\end{equation}
$\W= \mbox{Ran} \,{\bf A} \cap \mbox{Ran} \, {\bf B} =\CO^2 \text{ and } \mbox{Ker} \, {\bf A}= \mbox{Ker}
\,{\bf B}=\{0\}$. According to Theorems \ref{1iii} and \ref{1iv}, the condition ${\bf A}
\overline{\psi_-(0)}={\bf B} \overline{\psi_+(0)}$ is interacting and a solution of (\ref{ODE2}), in this case,
exists and is unique for arbitrary initial data given at $x_0 \not=0$. In fact
\begin{equation}\label{t}
{\bf A} \overline{\psi_-(0)}={\bf B} \overline{\psi_+(0)} \Longleftrightarrow \left\{ \begin{array}{ccc}
\psi_-'(0) &=& \frac{3}{2} \psi_+'(0)\\
&&\\
\psi_-(0) &=& \frac{1}{2} \psi_+(0)
\end{array} \right.
\end{equation}
and so it follows from (\ref{SODE2},\ref{ODE+-},\ref{h},\ref{t}) that the global solution of (\ref{ODE2}) is:
\begin{eqnarray}
\psi(x) &=& H_- (x) \left[\tfrac{3}{2}A \sin (x) + \tfrac{1}{2} B\cos (x) \right] + H(x) \left[A \sin (x)+B\cos (x)
\right] \nonumber \\
&& + \tfrac{3}{8}A \delta(x) - \tfrac{1}{8} B \delta'(x)
\end{eqnarray}
where the values of the constants $A,B \in \CO$ are completely fixed by the initial conditions
$\overline{\psi(x_0)}=\overline{C}$ given at $x_0 \not=0$. The order of the solution is $M-n=2$ when $B \not=0$,
and is $1$ when the initial conditions yield  $A \not=0$ and $B=0$ (this is consistent with the results of Theorem \ref{1i}).

\subsubsection{Second Case: Partially interacting conditions}\label{y}

Let $k=1$, $\alpha= 1/4$. Then
\begin{equation}\label{GM1}
\bf A= \left[ \begin{array}{lr} 0 & 1 \\
1 & 0
\end{array} \right] \quad , \quad
\bf B= \left[ \begin{array}{lr} 0 & 2 \\
0 & 0
\end{array} \right]
\end{equation}
and $\W=\mbox{Ran} \,{\bf A} \cap \mbox{Ran} \, {\bf B}=\mbox{Ran} \, {\bf B}= \{(x,y) \in \CO^2: \, y=0 \}$.
The interface condition
\begin{equation}\label{88}
{\bf A} \overline{\psi_-(0)}={\bf B} \overline{\psi_+(0)} \Longleftrightarrow \left\{ \begin{array}{ccc}
\psi_-'(0) &=& 2 \psi_+'(0)\\
\psi_-(0) &=& 0
\end{array} \right.
\end{equation}
is partially interacting since the values of $\overline{\psi_+(0)}$ are only partially fixed by those of
$\overline{\psi_-(0)}$ ($\psi_+'(0)$ is completely fixed, but $\psi_+(0)$ is free).

Taking into account (\ref{SODE2},\ref{h}) and the interface condition (\ref{88}), we get the global solution of
(\ref{ODE2}) for this case:
\begin{eqnarray}\label{81}
\psi(x)&=& H_- (x) \left[2B \sin (x) \right] + H(x) \left[A \cos (x)+B\sin (x) \right] \\
\nonumber&&+  \tfrac{3}{4}B \delta(x) - \tfrac{1}{4} A \delta'(x)
\end{eqnarray}
where $A,B \in \CO$ are integration constants.

It is clear from (\ref{81}) that a solution of (\ref{ODE2}) exists and is unique for arbitrary initial
conditions given at $x_0> 0$. On the other hand, for initial conditions given at $x_0 <0$, the solution of (\ref{ODE2}) exists
iff the solution $\psi_-$ of the associated IVP (\ref{ODE1}), with the same initial conditions, satisfies $ \overline{\psi_-(0)} \in \K_{\bf A} \Longleftrightarrow \psi_-(0)=0$. Moreover, if a solution of (\ref{ODE2}) exists, it
is not unique: there is a one-parameter family of solutions (parametrized by $A$) which are compatible with the
given initial condition. These properties are in accordance with the statement of Theorem \ref{1iv} for the case
$\mbox{Ker} \,{\bf A}=\{0\}$,  $\mbox{Ker} \,{\bf B}=\{(x,y) \in \CO^2: y=0\}$, $\K_{\bf A}=\{(x,y) \in \CO^2:
x=0\}$ and $\K_{\bf B}= \CO^2$.

\section{Generalized solutions of the EBB equation}\label{gg}

In this section we consider the EBB equation with a distributed vertical load $f$ and axial force $P$ (cf.
\cite{Ata97,HO07}):
\begin{equation}\label{importt}
 \left[ {a(x)  w''(x)}\right]'' + P(x) w''(x) = f(x),\quad x \in \left[ {-L,L} \right]
\end{equation}
$$
w(-L)=0,\quad w(L)=0, \quad w'(-L) = 0,\quad  w'(L) = 0
$$
where $w(x)$ is the transversal displacement of the beam axis and
$a(x)$ denotes the flexural stiffness, given by $a=EI$, where $E$
is the modulus of elasticity and $I$ the moment of inertia.
Moreover, the boundary conditions correspond to the case of a beam
that is clamped at both ends ($2L$ is the length of the beam).

The substitution $\psi= w'' $ is commonly used to lower the order of the equation (\ref{importt}):
\begin{equation}\label{importantegge}
\left[ a(x)\psi(x) \right]'' + P(x) \psi(x) = f(x),\quad  x \in \left[ {-L,L} \right]
\end{equation}
If $a(x)$ is non-differentiable or distributional then the term $a \psi$, and hence the differential equation
(\ref{importantegge}), is not in general well-defined for non-smooth functions $\psi$. Several approaches to
this case, using intrinsic products or generalized functions, have been presented in the literature, e.g.
\cite{BC07,Cad08,HO07,HO09,YSM00,YSR01}. In \cite{HO07} the authors studied the equation (\ref{importt}) for the
case where $a$ or $P$ display a jump discontinuity at an interior point of $[-L,L]$. They used the model product
\cite{Obe92} in order to define the term $a \psi$ precisely, and concluded that while the resulting formulation
is consistent with discontinuous parameters $a$ or $P$, it is not in general well-defined when either $a$ or $P$
are singular distributions. An alternative, in this case, is the formulation in terms of generalized functions
\cite{GKOS01,HO09,HKO13,Obe92}. Another possibility that has been considered in engineering applications is the
use of intrinsic products, specifically adapted to the particular form of the coefficients $a$ and $P$
\cite{Bag95,Bag02,BC07,Cad08}.

Here we will use the formalism of section 3 to obtain an intrinsic formulation of the EBB equation for the
general case $a, P \in \A$ and smooth $f$. In our formulation (\ref{importt}) is first rewritten in the form (\ref{eq0}):
\begin{equation}\label{nnn}
\left[{a_0(x)* w''(x)+ w''(x)*a_1(x)}\right]'' + P_0(x)* w''(x)+ w''(x)* P_1(x)= f(x) \, .
\end{equation}
We remark that if the coefficients $a_0,a_1,P_0,P_1 \in
\C_p^{\infty}$ then $w'' \in
\C_p^{\infty}$ (cf. Corollary \ref{5.1} below), and (\ref{nnn})
reduces to (\ref{importt}) with $a=a_0+a_1$ and $P=P_0+P_1$.

We will also consider the auxiliary equation (obtained from (\ref{nnn}) by setting $\psi= w''$):
\begin{equation}\label{mmm}
\left[ a_0(x)*\psi(x)+\psi(x)*a_1(x) \right]'' + P_0(x)* \psi(x)+\psi(x)*P_1(x)= f(x) \, .
\end{equation}
which will be important for studying (\ref{nnn}).

In the next subsection we present some general results concerning the regularity of the solutions of
eq.(\ref{nnn}). Then, in subsection 5.2, the eq.(\ref{nnn}) is used to model several different physical beams.
We consider the cases of: 1) Constant flexural stiffness, 2) Discontinuous flexural stiffness (corresponding to
a beam with two different sections), 3) Constant flexural stiffness with a structural crack, and finally 4)
Discontinuous flexural stiffness and a structural crack at the point of contact of the two sections. Up to our
knowledge, this last case has never been considered in the literature.

\subsection{General results}

Assume that (\ref{nnn}) is sectionally regular, and that the coefficients satisfy $a_0(x_0^-)+a_1(x_0^+) \not=
0$ at the non-regular points $x_0 \in \cup _{i=0}^1\text{sing supp } a_i$. As before, $a_i(x^\pm)=\lim_{y \to
x^\pm} a_i(y)$. Under these conditions, the next results are corollaries of Theorem \ref{1i}.

\begin{corollary} \label{5.1}
Let $a_i\in \C_p^{\infty}$, $i=0,1$ and let $P_i \in \A$ be such
that ord $P_i \le 2$, $i=0,1$. Then (\ref{nnn}) displays
continuously differentiable solutions.
\end{corollary}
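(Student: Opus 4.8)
The plan is to pass to the auxiliary equation (\ref{mmm}) via the substitution $\psi=w''$, recognize it as an instance of the ODE (\ref{eq0}) with $n=2$, invoke Theorem \ref{1i} to rule out a singular part in $\psi$, and then integrate twice to recover $w$. First I would expand the two bracketed terms in (\ref{mmm}) using the Leibniz rule for $*$ (Theorem \ref{2.7}), which is legitimate because $D_x$ is an inner operator satisfying Leibniz with respect to $*$ irrespective of its non-commutativity:
\[
(a_0 * \psi)'' = a_0'' * \psi + 2a_0' * \psi' + a_0 * \psi'', \qquad (\psi * a_1)'' = \psi'' * a_1 + 2\psi' * a_1' + \psi * a_1''.
\]
Substituting these, equation (\ref{mmm}) takes exactly the form (\ref{eq0}) with $n=2$ and coefficients $\tilde a_2=a_0$, $\tilde b_2=a_1$, $\tilde a_1=2a_0'$, $\tilde b_1=2a_1'$, $\tilde a_0=a_0''+P_0$, $\tilde b_0=a_1''+P_1$, all of which lie in $\A$.

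Next I would carry out the order bookkeeping needed to apply Theorem \ref{1i}(i). Writing $a_0=H_-a_{0-}+Ha_{0+}$ near the singular point, one sees that since $a_0,a_1\in\C_p^{\infty}$ carry no Dirac part, the singular parts $\tilde A_2,\tilde B_2$ vanish; the first derivatives $a_0',a_1'$ produce only a jump-induced $\delta$, so ${\rm ord}\,\tilde A_1,{\rm ord}\,\tilde B_1\le 0$; and the second derivatives $a_0'',a_1''$ have singular part of order at most $1$ (a $\delta'$ from the jump of $a_0$ together with a $\delta$ from the jump of $a_0'$). Combined with the hypothesis ${\rm ord}\,P_i\le 2$, the level-zero coefficients $\tilde a_0,\tilde b_0$ have singular part of order at most $2$. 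Hence
\[
M = {\rm max}\{\, {\rm ord}\,\tilde A_i,\ {\rm ord}\,\tilde B_i \ ;\ i=0,1,2 \,\} \le 2 = n,
\]
where $\tilde A_i,\tilde B_i$ denote the Dirac parts of $\tilde a_i,\tilde b_i$ in the sense of (\ref{Coef}). The remaining hypotheses of Theorem \ref{1i}, namely sectional regularity and $\tilde a_{2-}(0)+\tilde b_{2+}(0)=a_0(0^-)+a_1(0^+)\neq 0$, are precisely the standing assumptions of this subsection. Theorem \ref{1i}(i) then gives $\Delta=0$, so every solution is $\psi=H_-\psi_-+H\psi_+$ with $\psi_\pm\in\C^{\infty}(\RE)$; in particular $\psi=w''\in\C_p^{\infty}$.

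Finally I would integrate back. Because $\psi$ is piecewise smooth and bounded (no Dirac terms), its antiderivative $w'=\int\psi$ is continuous and piecewise smooth, so a further antiderivative $w=\int w'$ is continuously differentiable. Since any solution $w$ of (\ref{nnn}) has $w''=\psi$ solving (\ref{mmm}), this shows every solution of (\ref{nnn}) is $C^1$, which is the assertion. The case of several singular points is handled identically after localizing the argument at each point.

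The main obstacle I expect is the order count in the middle step: one must verify carefully that differentiating the piecewise-smooth coefficients $a_0,a_1$ twice raises the order of their singular parts by at most two (yielding order $\le 1$ at level zero), so that the genuinely order-$2$ singular contributions arise \emph{only} from $P_0,P_1$. This is exactly where the hypothesis ${\rm ord}\,P_i\le 2$ enters, and it is sharp: an order-$3$ part in some $P_i$ would force $M=3>n$ and, by Theorem \ref{1i}(ii), a nonzero $\Delta$ supported at the singular point, destroying the $C^1$ regularity of $w$.
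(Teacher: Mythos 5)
Your proposal is correct and follows essentially the same route as the paper's proof: pass to the auxiliary equation (\ref{mmm}), observe that after Leibniz expansion the maximal order $M$ of the coefficients (coming from $a_i''$ or $P_i$) satisfies $M\le 2=n$, invoke Theorem \ref{1i}(i) under the standing assumptions of the subsection to get $\Delta=0$ and $\psi=w''\in\C_p^{\infty}$, and integrate twice to conclude $w\in\C^1$. One notational slip: in the paper's convention ${\rm ord}\,\delta^{(k)}=k+1$ (e.g.\ $M=4$ for the $\delta'''$ coefficient in Section 4), so the Dirac parts of $a_i'$ and $a_i''$ have orders $\le 1$ and $\le 2$, not $\le 0$ and $\le 1$ as you wrote; this does not affect your conclusion, since the binding contributions (the $\delta'$ terms from the jumps of $a_i$ and the singular parts of $P_i$) both have order $2\le n$, so $M\le n$ still holds and Theorem \ref{1i}(i) applies exactly as you claim.
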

\begin{proof}
Consider the auxiliary equation (\ref{mmm}). The maximal order of the coefficients is $M\le 2$ ($M$ may be the
order of $P_i$ or of $a_i''$). Hence $M \le n=2$, where $n$ is the order of the differential equation
(\ref{mmm}). It then follows from Theorem \ref{1i} that ord $\psi=0 \Longrightarrow
\psi\in \C_p^{\infty}$. Since $w''=\psi$, we conclude that $w \in \C^1$.
\end{proof}

An example of this form will be considered in the next section (cf. Fig. 1). Another case is:
\begin{corollary}\label{5.2}
Let $a_i \in \A$, $i=0,1$ be such that ord $a_i \le 1,\ i=0,1$ and
let $P_i \in \A$ be such that ord $P_i \le 3$, $i=0,1$. Then
(\ref{nnn}) has continuous solutions.
\end{corollary}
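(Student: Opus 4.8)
The plan is to follow the proof of Corollary \ref{5.1} and to deduce the statement from Theorem \ref{1i} applied to the auxiliary equation (\ref{mmm}). First I would put $\psi = w''$, so that $w$ solves (\ref{nnn}) precisely when $\psi$ solves (\ref{mmm}); the regularity of $w$ is then recovered from the order of $\psi$ by integrating twice. The essential point is therefore to control the order of the singular part of $\psi$, and this is exactly what Theorem \ref{1i} delivers once the quantity $M$ for the coefficients of (\ref{mmm}) is known.

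To compute $M$, I would rewrite (\ref{mmm}) in the normal form (\ref{eq0}) with $n=2$. Expanding $[a_0 * \psi + \psi * a_1]''$ by the Leibnitz rule (Theorem \ref{2.7}), the left and right coefficients of $\psi''$ are $a_0$ and $a_1$, those of $\psi'$ are $2a_0'$ and $2a_1'$, and those of $\psi$ are $a_0'' + P_0$ and $a_1'' + P_1$. I would then bound the orders of the Dirac-supported parts: from $\mathrm{ord}\, a_i \le 1$ one gets $\mathrm{ord}\, a_i'' \le 3$, and combined with $\mathrm{ord}\, P_i \le 3$ this yields $\mathrm{ord}(a_i'' + P_i) \le 3$, while the coefficients of $\psi'$ and $\psi''$ have order at most $2$ and $1$ respectively. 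Hence $M \le 3$.

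With $n = 2$, Theorem \ref{1i} gives the desired conclusion in either of its cases: if $M \le 2$ part (i) forces $\Delta = 0$, while if $M = 3$ part (ii) gives $\mathrm{supp}\,\Delta \subseteq \{0\}$ and $\mathrm{ord}\,\Delta \le M - n = 1$; in all cases $\psi = H_-\psi_- + H\psi_+ + \Delta$ with $\mathrm{ord}\,\Delta \le 1$, i.e. $\Delta = d_0 \delta$. The hypotheses of Theorem \ref{1i} are supplied by the standing assumptions of this subsection, namely sectional regularity and $a_0(x_0^-)+a_1(x_0^+) \ne 0$, the latter being precisely the condition $a_{n-}(0)+b_{n+}(0)\ne 0$ since the top coefficients of (\ref{mmm}) are $a_0$ on the left and $a_1$ on the right. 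Finally, since $w'' = \psi$ has singular part at most a Dirac measure, a first integration gives $w' = (\text{continuous}) + d_0 H + \text{const}$, which has only a jump, and a second integration yields a continuous $w$; thus $w \in \C^0$.

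The argument is mostly bookkeeping, and the one step that must be done carefully is the order count: differentiating the piecewise-smooth parts of $a_i$ creates additional Dirac terms in $a_i'$ and $a_i''$ coming from the jumps of $a_i$, but these remain of order $\le 3$, so the controlling contributions are the $\delta''$ from the singular part of $a_i$ and the order-$3$ part of $P_i$. The bound $\mathrm{ord}\,\Delta \le 1$ is sharp for the conclusion: it is exactly the threshold at which a Dirac mass in $w''$ integrates to a continuous $w$, whereas a $\delta'$ in $\psi$ (which would arise if $\mathrm{ord}\, a_i = 2$ or $\mathrm{ord}\, P_i = 4$) would force a genuine jump in $w$ itself.
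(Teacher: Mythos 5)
Your proposal is correct and follows essentially the same route as the paper: apply Theorem \ref{1i} to the auxiliary equation (\ref{mmm}), observe that $M=\max\{\mathrm{ord}\,a_i'',\,\mathrm{ord}\,P_i\}\le 3$ so that the singular part $\Delta$ of $\psi=w''$ has order at most $M-n=1$ (i.e.\ at worst a Dirac mass), and integrate twice to conclude $w\in\C^0$. Your version merely makes explicit what the paper leaves implicit, namely the Leibnitz expansion identifying the coefficients of (\ref{mmm}) in the normal form (\ref{eq0}) and the verification that the standing assumptions of the subsection supply the hypotheses of Theorem \ref{1i}.
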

\begin{proof}
Consider, once again, the auxiliary equation (\ref{mmm}). The maximal order of the coefficients satisfies
$$
M= \, {\rm max} \{ \, {\rm ord} \, a_i'', \, {\rm ord }\, P_i \, , \, i=0,1 \} \le 3
$$
Hence, the solutions of (\ref{mmm}) satisfy ord $\psi \le M-2 =1$. It follows from $w''=\psi$ that $w$ is at
least continuous.
\end{proof}

Two examples of this form will also be considered in the next section (cf. Figs. 2, and 3). Finally:
\begin{corollary}
Let $a_i,P_i \in \A$, $i=0,1$ and let
$$
M= \, {\rm max} \{ \, 2+ {\rm ord} \, a_i, \, {\rm ord }\, P_i \, , \, i=0,1 \}
$$
If $M \le 4$ then the solution of (\ref{nnn}) satisfies ord $w =0 \Longleftrightarrow w \in \C_p^{\infty}$. If
$M > 4$ then ord $w \le M-4 $ and $w$ may be singular.
\end{corollary}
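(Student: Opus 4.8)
The plan is to follow verbatim the strategy already used for Corollaries \ref{5.1} and \ref{5.2}: pass from (\ref{nnn}) to the auxiliary equation (\ref{mmm}) through $\psi = w''$, use Theorem \ref{1i} to control the order of $\psi$, and then translate the result back to $w$ by double integration. The only genuinely new ingredient is keeping track of how the two successive integrations shift the order.

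First I would write (\ref{mmm}) in the canonical form (\ref{eq0}) with $n=2$. Applying the Leibniz rule for $*$ (valid by Theorem \ref{2.7}),
\[
[a_0 * \psi]'' = a_0 * \psi'' + 2 a_0' * \psi' + a_0'' * \psi, \qquad [\psi * a_1]'' = \psi'' * a_1 + 2\psi' * a_1' + \psi * a_1'',
\]
and using distributivity, (\ref{mmm}) becomes
\[
a_0 * \psi'' + \psi'' * a_1 + 2a_0' * \psi' + 2\psi' * a_1' + (a_0'' + P_0)*\psi + \psi*(a_1'' + P_1) = f.
\]
This is of the form (\ref{eq0}) with left coefficients $a_0,\ 2a_0',\ a_0''+P_0$ and right coefficients $a_1,\ 2a_1',\ a_1''+P_1$ attached to $\psi^{(2)},\psi^{(1)},\psi^{(0)}$. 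Since differentiation raises the order of an element of $\A$ by at most one, the maximal coefficient order is bounded by $\max\{\text{ord }a_i+2,\ \text{ord }P_i : i=0,1\}=M$. The leading-coefficient hypothesis of Theorem \ref{1i}, $a_{n-}(0)+b_{n+}(0)\neq 0$, reads here $a_0(0^-)+a_1(0^+)\neq 0$, which is precisely the standing assumption of this subsection, and sectional regularity of (\ref{mmm}) is inherited from that of (\ref{nnn}). Thus Theorem \ref{1i} applies and yields ord $\psi \le \max(M-2,0)$: if $M\le 2$ it gives $\Delta=0$, i.e.\ ord $\psi=0$, and if $M>2$ it gives supp $\Delta\subseteq\{0\}$ with ord $\Delta\le M-2$.

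Next I would transfer this bound to $w$ via $w''=\psi$. Writing the most singular term of $\psi$ as $c\,\delta^{(m)}$ with $m=\text{ord }\psi-1$ and integrating twice (antidifferentiation sends $\delta^{(j)}\mapsto\delta^{(j-1)}$ for $j\ge 1$ and $\delta\mapsto H$, while piecewise smooth parts integrate to piecewise smooth, hence regular, parts, and the integration constants are regular), one checks that ord $w = \max(\text{ord }\psi-2,0)$. Combining the two estimates gives ord $w \le \max(M-4,0)$. Hence if $M\le 4$ then ord $w=0$, which for an element of $\A$ is equivalent to $w\in\C_p^{\infty}$ (the singular part must vanish, otherwise it would contribute positive order), establishing the stated equivalence; and if $M>4$ then ord $w\le M-4$, so $w$ carries $\delta$-type terms as soon as $M\ge 5$ and may therefore be singular. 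For more than one singular point the same computation is performed locally at each point and the orders combine by taking the maximum, as in the preceding corollaries.

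The routine parts are the Leibniz expansion and the two-case reading of Theorem \ref{1i}. The step requiring the most care is the order bookkeeping in the passage $\psi=w''$: I must verify that double integration lowers the order by exactly two on the derivative-of-delta terms but \emph{saturates} at zero for orders $\le 2$ — so that a $\delta'$ in $\psi$ produces only a bounded jump in $w$, keeping ord $w=0$. This saturation is exactly what shifts the regularity threshold from $M=2$ (at the level of $\psi$) to $M=4$ (at the level of $w$), and getting it right is the crux of the argument.
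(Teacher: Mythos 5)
Your proof is correct, but it takes a different (longer) route than the paper's own proof, which is essentially a one-liner: the paper regards (\ref{nnn}) itself as a fourth-order equation of the form (\ref{eq0}) --- after the Leibniz expansion its coefficients are $a_0,\ 2a_0',\ a_0''+P_0$ on the left and $a_1,\ 2a_1',\ a_1''+P_1$ on the right, all of order at most $M$ --- and applies Theorem \ref{1i} directly with $n=4$, so the threshold $M\le 4$ and the bound ${\rm ord}\, w \le M-4$ come out immediately, with no need to pass through $\psi=w''$. You instead follow the template of Corollaries \ref{5.1} and \ref{5.2}: apply Theorem \ref{1i} to the auxiliary second-order equation (\ref{mmm}), getting ${\rm ord}\,\psi \le {\rm max}\,\{M-2,0\}$, and then transfer the bound to $w$ by double integration. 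Your bookkeeping is sound: ${\rm ord}\, D_x F \le {\rm ord}\, F +1$ for the coefficients, so the coefficient orders of (\ref{mmm}) are bounded by $M$; the hypotheses of Theorem \ref{1i} (sectional regularity and $a_0(0^-)+a_1(0^+)\ne 0$) are indeed the standing assumptions of the subsection, as you note; and under double antidifferentiation $\delta^{(j)} \mapsto \delta^{(j-2)}$ for $j\ge 2$ while $\delta'$ and $\delta$ produce only a jump and a kink, giving ${\rm ord}\, w = {\rm max}\,\{{\rm ord}\,\psi - 2, 0\}$ and hence ${\rm ord}\, w \le {\rm max}\,\{M-4,0\}$. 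What the paper's route buys is brevity: working at $n=4$ makes the shift of the regularity threshold from $2$ to $4$ automatic, which is exactly the step you had to argue by hand (your ``saturation'' analysis). What your route buys is uniformity with the two preceding corollaries and an explicit explanation of why low-order singular terms in $\psi=w''$ leave $w$ piecewise smooth.
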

\begin{proof}
The proof follows directly from Theorem \ref{1i} and the fact that (\ref{nnn}) is a fourth order
differential equation.
\end{proof}

\subsection{A non-uniform beam with structural cracks}
We now study several particular examples of physical beams. Consider a clamped-clamped (CC) beam of length $2L$
that is made of two segments with (possible) different flexural stiffness, and may exhibit a crack at the point
of contact of the two sections. To simplify the formulation, the contact point of the two sections is assumed to
be the middle point of the beam (which is placed at $x=0$). This system can be modelled by eq.(\ref{nnn}) with
coefficients:
\begin{eqnarray} \label{cracks}
a_0(x) & = & A H_-*\left(1- K_0 \delta\left(\tfrac{x}{2L}\right) \right)=A H_- - 2 A L K_0 \delta(x) \nonumber
\\
a_1(x) &=& \left(1- K_1 \delta\left(\tfrac{x}{2L}\right)\right)* BH=B H -2BL K_1 \delta(x)
\end{eqnarray}
where $A >0$ and $B>0$ are the constant flexural stiffness in the sections $[-L,0)$ and $(0,L]$, respectively.
Following \cite{BC07,Cad08}, the crack was modelled by a Dirac delta term. The parameters $K_0,K_1$ are related
to the depth of the crack at the left and right sides of the contact point. If the crack is located at a regular
point of the beam (i.e. $A=B$) then the particular values of $K_0$ and $K_1$ are irrelevant and the solution of
eq.(\ref{nnn}) is only dependent of the value of $K_0+K_1$ (cf. eqs.(\ref{ic},\ref{S}) below).

In order to simplify the discussion, let us consider, in addition, that in (\ref{nnn}) the vertical load is
constant $f(x)=C$ and that there is no axial force. The equation (\ref{nnn}) then reads:
\begin{equation} \label{nnn1}
\left[(A H_- - 2 A L K_0 \delta(x) )* w'' + w'' * (B H -2BL K_1 \delta(x)) \right]''=C \, , \quad x \in [-L,L]
\end{equation}
and the equation for $\psi=w''$, is:
\begin{equation} \label{mmm1}
[A H_- * \psi + B \psi * H -2ALK_0 \delta(x) * \psi -2BL K_1 \psi * \delta (x)]''=C \, .
\end{equation}
On $[-L,0)$ and $(0,L]$, (\ref{mmm1}) reduces to
\begin{equation} \label{nn11}
A \psi''(x)= C \quad \mbox{and} \quad B \psi''(x)= C \, ,
\end{equation}
respectively. Hence, (\ref{mmm1}) is sectionally regular (cf. Definition \ref{Re}) and it follows from Theorem
\ref{1i} that its general solution is of the form:
\begin{equation} \label{nn12}
\psi =H_- \psi_- + H \psi_+ + \Delta
\end{equation}
where supp $\Delta \subseteq \{0\}$ and $\psi_-$, $\psi_+$ satisfy the first and second equation of (\ref{nn11})
on $[-L,0]$ and $[0,L]$, respectively. Substituting (\ref{nn12}) into (\ref{mmm1}), we get:
\begin{eqnarray}\label{nn13}
 && H_- A \psi''_- + H B \psi''_+ + (A+B) \Delta''  + \left[ B \psi_+'(0) - A \psi_-'(0) \right] \delta (x) \\
\nonumber &+& \left[B \psi_+(0)- A \psi_-(0) \right] \delta '(x) -2L\left[A K_0\psi_+(0) +BK_1 \psi_-(0)
\right]\delta''(x)=C \, .
\end{eqnarray}
Using (\ref{nn11}), the two first terms cancel the right-hand side. It follows that:
\begin{equation} \label{nn12-1}
\Delta= \frac{2L}{A+B} (AK_0\psi_+(0)  +B K_1 \psi_-(0)) \delta(x)
\end{equation}
and that
\begin{equation} \label{p1}
A \psi_-(0) = B \psi_+(0) \quad \mbox{and} \quad A \psi'_-(0) = B \psi'_+(0) \, .
\end{equation}
Hence, the interface conditions at $x=0$ are interacting.

Going back to equations (\ref{nn11}), we easily find their general
solutions:
\begin{eqnarray} \label{nn13}
\psi_-(x) &=& \tfrac{C}{2A} x^2 + \alpha_- x + \beta_- \nonumber \\
\psi_+(x) &=& \tfrac{C}{2B} x^2 + \alpha_+ x + \beta_+
\end{eqnarray}
where $\alpha_\pm$ and $\beta_\pm$ are integration constants. Collecting the results
(\ref{nn12},\ref{nn12-1},\ref{nn13}), we determine the explicit expression of $\psi$, and can then solve
$w''=\psi$. We obtain:
\begin{equation}\label{w1}
w(x)= H_- w_- + H w_+
\end{equation}
where
\begin{eqnarray} \label{w2}
w_-(x) &=& \tfrac{C}{24A} x^4 + \tfrac{\alpha_-}{6} x^3 + \tfrac{\beta_-}{2} x^2 + \gamma_- x + \epsilon_- \nonumber \\
w_+(x) &=& \tfrac{C}{24B} x^4 + \tfrac{\alpha_+}{6} x^3 + \tfrac{\beta_+}{2} x^2 + \gamma_+ x + \epsilon_+
\end{eqnarray}
and $\gamma_\pm$ and $\epsilon_\pm$ are new integration constants. Moreover, $w_\pm$ satisfy the interface
conditions
\begin{equation} \label{c1}
w_+(0)-w_-(0)=0 \quad , \quad w_+'(0)-w_-'(0)=\tfrac{2L}{A+B}(AK_0 \beta_+ + BK_1 \beta_-)
\end{equation}
(these can be easily obtained by substituting (\ref{w1},\ref{w2}) into $w''=\psi$). From the interface
conditions for $\psi$ (cf. (\ref{p1})), we also have
\begin{equation} \label{c2}
A \beta_-= B \beta_+ \quad \mbox{and} \quad A \alpha_-= B \alpha_+ \, .
\end{equation}

Imposing on (\ref{w1},\ref{w2}) the conditions (\ref{c1},\ref{c2}), the CC boundary conditions
$$
w_-(-L)=w_-'(-L)=w_+(L)=w_+'(L)=0 \, ,
$$
and solving for the integration constants, we get:
\begin{equation}\label{ic} \left\{ \begin{array}{lll}
\alpha_-  & = & \frac{(B-A)(3LC+S)}{8A(A+B)} \\
\\
\beta_-  & = & \frac{L}{12A} S \\
\\
\gamma_- & = & \frac{CL^3 (17A-B)+L^2S(7A+B)}{48A(A+B)}\\
\\
\epsilon_-&=& \frac{3CL^4+L^3S}{12(A+B)}
\end{array} \right. \, \, ,
\left\{ \begin{array}{lll}
\alpha_+ & = & \frac{(B-A)(3LC+S)}{8B(A+B)} \\
\\
\beta_+ & = & \frac{L}{12B} S \\
\\
\gamma_+ &= & \frac{CL^3 (-A+17B)+L^2S(A+7B)}{48B(A+B)}\\
\\
\epsilon_+ &=& \frac{3CL^4+L^3S}{12(A+B)}
\end{array} \right.
\end{equation}
where
\begin{equation}\label{S}
S=\tfrac{CL (A^2-34AB+B^2)}{A^2+14AB +B^2+8(A^2K_0+B^2K_1)} \, .
\end{equation}
The eqs.(\ref{w1},\ref{w2},\ref{ic}) yield the unique solution of eq.(\ref{nnn1}) for clamped-clamped boundary
conditions. It is interesting to notice that the solution $w$ is dependent of the individual values $K_0$ and
$K_1$ of the intensity of the crack at the left and right sides of the contact point, and not only of $K_0+K_1$.
The exception is when the beam is uniform ($A=B$).

\vspace{0.5cm}

Below we display the graphics of the deflection $w$ and the slope $w'$ for several different CC beams. In all
cases the length of the beam is $2L=500\, {\rm cm}$ and the vertical load is $C=-0.015 \, {\rm kN/cm}$.

\begin{enumerate}

\item[[Fig.1]] A uniform beam with no cracks (thin line) with parameters:
$$
A=B=10^8\, {\rm kNcm^2},\quad  K_0=K_1=0
$$
versus a non-uniform beam also without cracks (thick line) with parameters:
$$
A=2B=10^8\, {\rm kN cm^2}, \quad  K_0=K_1=0 \, .
$$
The deflection function is continuous and differentiable (cf. Corollary \ref{5.1}).

\vspace{0.5cm}

\item[[Fig.2]] The same uniform beam (thin line) versus a uniform beam with a structural crack at $x=0$ (thick
line):
$$
 A=B=10^8 \, {\rm kNcm^2},\quad  K_0=K_1=\tfrac{1}{5} \, .
$$
The intensity of the crack is given by $K_0+K_1$. As expected (cf. Corollary \ref{5.2}) the deflection function
is continuous but not differentiable at $x=0$.

\vspace{0.5cm}

\item[[Fig.3]] The same uniform beam (thin line) versus a non-uniform beam with a structural crack at the point
of contact between the two sections (thick line). The intensity of the crack is given by $K_0$ (on the left side
of the contact point) and $K_1$ (on the right side). The parameters of this beam are:
$$
A=2B=10^8\, {\rm kNcm^2}, \quad K_0= K_1=\tfrac{1}{5} \, .
$$
The solution is dependent on the particular values of $K_0$ and $K_1$ and not only on $K_0+K_1$ (cf.
eq.(\ref{S})).

\end{enumerate}

\vspace{1cm}

\hspace{-1cm}\includegraphics [scale=1] {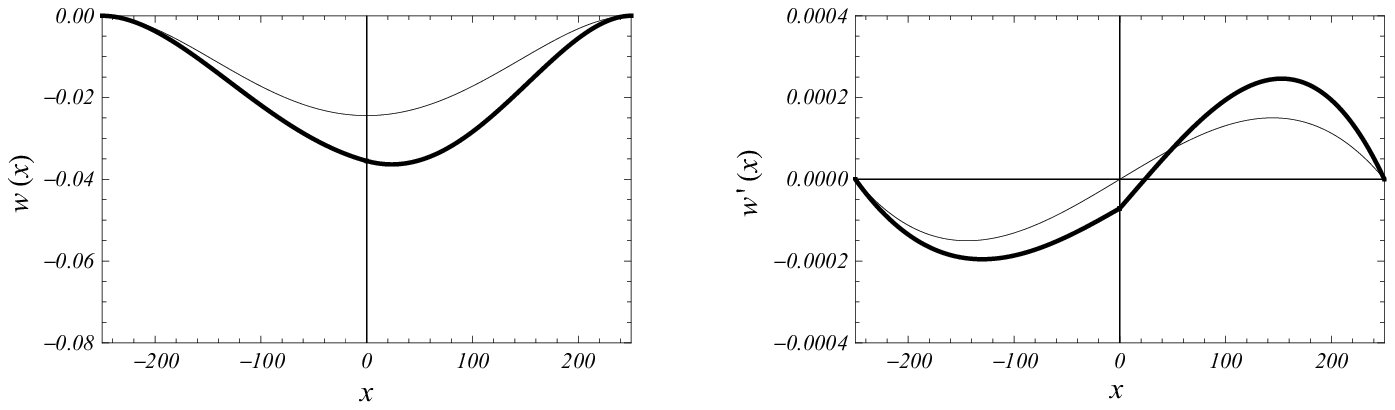}

\ \ \ \ \ \ \ \ \ \ \ \ \ \ \ \ \ \ \ \ \footnotesize {(a)}   \ \ \ \ \ \ \ \ \ \ \ \ \ \ \ \ \ \  \ \ \   \ \ \  \ \ \ \ \ \ \ \ \ \ \ \  \ \ \ \ \ \ \ \ \ \ \ \ \ \ \ \ \ \ \ \ \ \ \ \ \ \ \ \ \ \footnotesize  { (b)}\\
\footnotesize {Fig. 1: Deflection (Fig.1(a)) and slope (Fig.1(b))
of the CC uniform beam  ($\rule{10pt} {0,2pt}$) versus the CC
non-uniform beam ($\rule{10pt} {1,2pt}$),  both beams without
cracks.}

\vspace{2.5cm}
\hspace{-1cm}\includegraphics [scale=1] {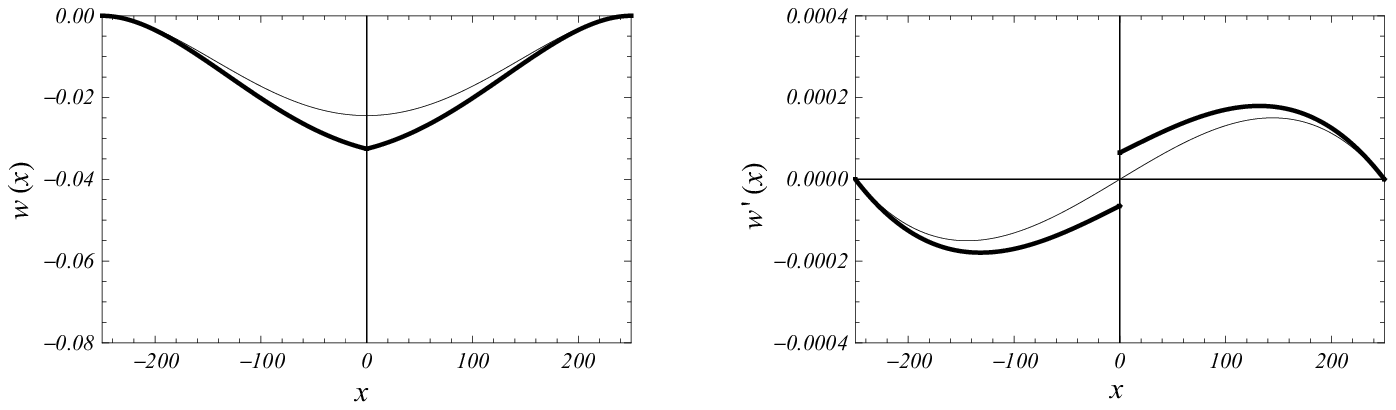}

\ \ \ \ \ \ \ \ \ \ \ \ \ \ \ \ \ \ \ \ \ \ \ \ \footnotesize {(a)}   \ \ \ \ \ \ \ \ \ \ \ \ \ \ \ \ \ \  \ \ \   \ \ \  \ \ \ \ \ \ \ \ \ \ \ \  \ \ \ \ \ \ \ \ \ \ \ \ \ \ \ \ \ \ \ \ \ \ \ \ \ \ \ \ \ \footnotesize  { (b)}\\
\footnotesize {Fig. 2: Deflection (Fig.2(a)) and slope (Fig.2(b))
of the CC uniform beam with no cracks ($\rule{10pt} {0,2pt}$)
versus the CC uniform beam with a structural crack at $x=0$
($\rule{10pt} {1,2pt}$).}

\vspace{2.5cm}
\hspace{-1cm}\includegraphics [scale=1] {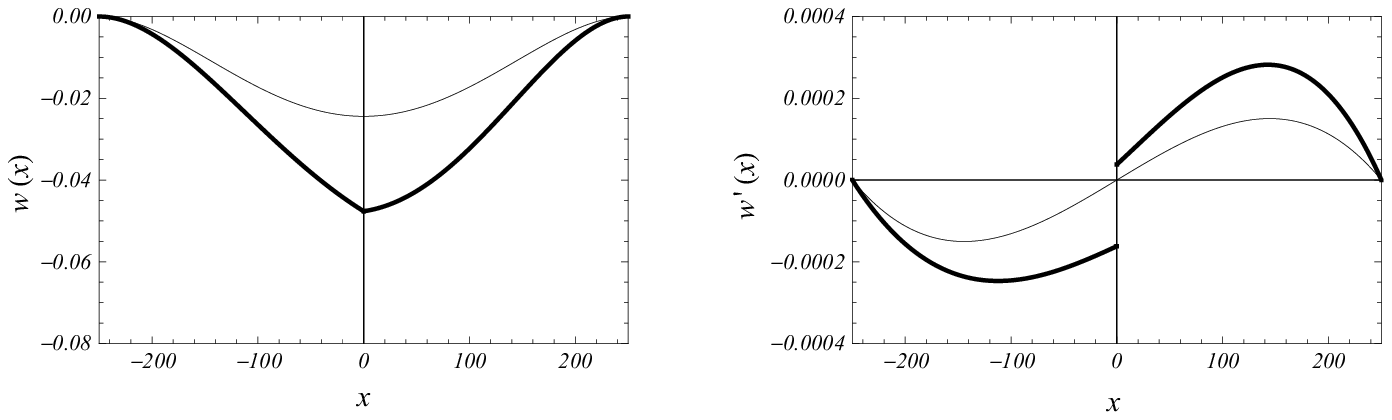}

\ \ \ \ \ \ \ \ \ \ \ \ \ \ \ \ \ \ \ \ \ \ \ \ \footnotesize {(a)}   \ \ \ \ \ \ \ \ \ \ \ \ \ \ \ \ \ \  \ \ \   \ \ \  \ \ \ \ \ \ \ \ \ \ \ \  \ \ \ \ \ \ \ \ \ \ \ \ \ \ \ \ \ \ \ \ \ \ \ \ \ \ \ \ \ \footnotesize  { (b)}\\
\footnotesize {Fig. 3: Deflection (Fig.3(a)) and slope (Fig.3(b))
of the CC uniform beam with no cracks ($\rule{10pt} {0,2pt}$)
versus the CC non-uniform beam with a structural crack at $x=0$
($\rule{10pt} {1,2pt}$).}

\normalsize

\bigskip

\noindent\textbf{Acknowledgements}. Cristina Jorge was supported
by the PhD grant SFRH/BD/85839/2012 of the Portuguese Science
Foundation. N.C. Dias and J.N. Prata were supported by the
Portuguese Science Foundation (FCT) under the grant
PTDC/MAT-CAL/4334/2014.

\vspace{1cm}

*******************************************************************

\setcounter{footnote}{0}

\textbf{Author's addresses:}

\begin{itemize}
\item \textbf{Nuno Costa Dias}\footnote{Corresponding Author} and \textbf{Jo\~ao Nuno Prata:}
Grupo de F\'{\i}sica Matem\'{a}tica, Departamento de Matem\'atica, Universidade
de Lisboa, Campo Grande, Edif\'{\i}cio C6, 1749-016 Lisboa, Portugal and
Escola Superior N\'autica Infante D. Henrique, Av. Eng. Bonneville Franco, 2770-058 Pa\c{c}o d'Arcos, Portugal.

\item \textbf{Cristina Jorge}: Departamento de Matem\'{a}tica.
Universidade Lus\'{o}fona de Humanidades e Tecnologias. Av. Campo
Grande, 376, 1749-024 Lisboa, Portugal and Grupo de F\'{\i}sica
Matem\'{a}tica, Departamento de Matem\'atica, Universidade
de Lisboa, Campo Grande, Edif\'{\i}cio C6, 1749-016 Lisboa, Portugal.

\end{itemize}

\vspace{0.3cm}

\small

{\it E-mail address} (NCD): ncdias@meo.pt

{\it E-mail address} (CJ): cristina.goncalves.jorge@gmail.com

{\it E-mail address} (JNP): joao.prata@mail.telepac.pt

*******************************************************************

\end{document}